\documentclass[10pt]{article}\normalsize

\usepackage{epsfig}
\usepackage{epstopdf}
\usepackage[titletoc,title]{appendix}
\usepackage{graphicx}
\usepackage{latexsym, cite, bm, amsmath}
\usepackage{contour,soul,color}
\usepackage{booktabs,threeparttable} 
\usepackage[rotateright]{rotating} 
\usepackage{tabularx,multirow} 
\usepackage{cases}
\usepackage{float}
\usepackage{mathtools}
\graphicspath{{figure/}}
\usepackage{caption}
\usepackage{diagbox}
\usepackage{lscape}
\usepackage{amsfonts}
\usepackage{epstopdf}
\usepackage{amsmath,url,cases}
\usepackage{amsthm}

\usepackage{makecell}
\usepackage[english]{babel}
\usepackage[latin1]{inputenc}
\usepackage{times}
\usepackage{bbding}
\usepackage{latexsym}
\usepackage{epic}
\usepackage{indentfirst}
\usepackage{hyperref}
\usepackage{subfigure}
\usepackage{fancyvrb}
\usepackage{array}
\usepackage[linesnumbered,boxed,ruled,commentsnumbered,titlenotnumbered]{algorithm2e}
\usepackage{setspace}


\usepackage[left=1.10in,top=1.10in,right=1.10in,bottom=1.10in]{geometry}
\linespread{1.1}

\makeatletter
\newcommand{\nosemic}{\renewcommand{\@endalgocfline}{\relax}}
\newcommand{\dosemic}{\renewcommand{\@endalgocfline}{\algocf@endline}}
\newcommand{\pushline}{\Indp}
\newcommand{\popline}{\Indm\dosemic}
\let\oldnl\nl
\newcommand{\nonl}{\renewcommand{\nl}{\let\nl\oldnl}}
\makeatother

\numberwithin{equation}{section}

\newtheorem{theorem}{Theorem}[section]

\newtheorem{lemma}[theorem]{Lemma}

\newtheorem{remark}[theorem]{Remark}

\newtheorem{condition}{Condition}[section]

\numberwithin{equation}{section}

\def\be{\begin{equation}}
\def\ee{\end{equation}}

\def\la{\lambda}
\def \[{\begin{equation}}
\def \]{\end{equation}}

\def\R{{\mathbb R}}

\def\L{{\cal L}}

\def\S{{\cal S}}

\def\N{{\cal N}}

\def\st{\mathrm{s. t.}}

\def\nn{\nonumber}
\def\Tsf{\mathsf{T}}

\newcommand{\rev}[1]{{\color{black}{#1}}}

\begin{document}

\title{ An Alternating Direction Method of Multiplier\rev{s} with the BFGS \rev{U}pdate for Structured Convex Quadratic Optimization
}

\author{Yan Gu
and
Nobuo Yamashita
}
\date{\today}
\maketitle

\begin{abstract}
The alternating direction method of multipliers (ADMM) is an effective method for solving wide fields of convex problems. At each iteration, the classical ADMM solves two subproblems exactly. However, in many applications, it is expensive or impossible to obtain the exact solutions of the \rev{subproblems}. To overcome the difficulty, some proximal terms are added to the subproblems.
This class of methods normally solves the original subproblem approximately, and thus takes more iterations. This fact urges us to consider that a special proximal term can lead to a better result as the classical ADMM.
In this paper, we propose a proximal ADMM whose regularized matrix in the proximal term is generated by the BFGS update (or \rev{limited} memory BFGS) at every iteration. These types of matrices use second-order information of the objective function. The convergence of the proposed method is proved under certain assumptions. Numerical results are \rev{presented} to show the effectiveness of the proposed proximal ADMM.

\end{abstract}

\textbf{Keywords:}
alternating direction method \rev{of multipliers}, variable metric semi-proximal method, BFGS \rev{update}, limited memory BFGS, convex \rev{optimization}.


\section{Introduction}
\label{intro}
We consider the following convex \rev{optimization} problem:
\[\label{convex}\begin{array}{ll}
 \hbox{minimize}&  \frac{1}{2}\|Ax-b\|^2+g(x)\\
 \hbox{subject to}&  x\in \R^n, \\ \end{array}\]
where  $g: \R^n \rightarrow \R\cup \{\infty\}$ is a proper convex function,  $A\in \R^{m \times n}$ and $b\in \R^m$. For example, ``$g$'' here can be an indicator function on a convex set or the $l_1$ penalty function \rev{defined as} $\|x\|_1:=\sum\nolimits_{i=1}^{m}|x_i|$. \rev{Problem} \eqref{convex} includes many important statistical learning problems such as the LASSO problem \cite{tibshirani1996regression}.
The number $n$ of variables in these learning problems is usually large.
\par
Let $f(x)=\frac{1}{2}\|Ax-b\|^2$.
Then problem \eqref{convex} can be written as
\[\label{cp} \begin{array}{ll}
 \hbox{minimize}&  f(x)+g(y)\\
 \hbox{subject to}&  x-y=0  \\
                  & x, y\in \R^n. \end{array}\]
Let $\L_{\beta}(x,y,\la)$ be the augmented Lagrangian function for \eqref{cp} defined by
\[\label{augL}
\L_{\beta}(x,y,\la):=f(x)+g(y)-\langle\la,x-y\rangle+\frac{\beta}{2}\|x-y\|^2,\]
where $\la\in\R^n$ is multipliers associated to the linear constraints and $\beta>0$ is a penalty parameter.

\rev{For solving problem \eqref{cp},}
the alternating direction method of multipliers (ADMM) was proposed by Gabay and Mercier \cite{gabay1976dual}, \rev{and} Glowinski and Marrocco \cite{glowinski1975approximation} in the mid-1970s. \rev{ADMM} generates sequence \rev{$\{(x^k, y^k, \lambda^k)\}$} via the following \rev{recursions}:
\vspace{-2mm}
\begin{subnumcases}{\label{equ:ADMM}}
\label{alx}x^{k+1}=\arg\min_{x}\L_{\beta}(x,y^k,\la^k),\\
\label{aly}y^{k+1}=\arg\min_{y}\L_{\beta}(x^{k+1},y,\la^k),\\
\label{all}\la^{k+1}=\la^k-\beta(x^{k+1}-y^{k+1}).
\end{subnumcases}
The global convergence of the ADMM \eqref{alx}-\eqref{all} can be established under very mild conditions \cite{boyd2011distributed}.

By noting the fact that the subproblem in \eqref{alx}-\eqref{all} may be difficult to solve exactly in many applications, Eckstein \cite{eckstein1994some} and He et al. \cite{HLHY2002} have considered to add proximal terms to the subproblems for different \rev{purposes}. Recently, Fazel et al. \cite{fazel2013hankel} proposed the following semi-proximal ADMM scheme:
\vspace{-1mm}
\begin{subnumcases}{\label{equ:sADMM}}
\label{equ:sADMM1} x^{k+1}=\arg\min_{x}\ \L_{\beta}(x,y^k,\la^k) + \frac12 \|x - x^{k}\|_{T}^2,\\
\label{equ:sADMM2} y^{k+1}=\arg\min_{y}\ \L_{\beta}(x^{k+1},y,\la^k) + \frac12 \|y - y^{k}\|_{S}^2,\\
\label{equ:sADMM3} \la^{k+1}=\la^{k}-\rev{\alpha}\beta(x^{k+1}-y^{k+1}),
\end{subnumcases}
where $\rev{\alpha} \in (0, (1+\sqrt 5)/2)$, $\|z\|_G = \sqrt {z^ \top Gz}$ for $z \in \R^n$ and $G \in \R^{n \times n}$. Fazel et al. \cite{fazel2013hankel} \rev{showed} its global convergence when $T$ and $S$ are positive semidefinite, which makes the algorithm more flexible. See \cite{deng2012global, he2012convergence, fazel2013hankel, xu2011class} for a brief history of the development of the semi-proximal ADMM and the corresponding convergence results. \rev{Quite recently, the papers \cite{li2016majorized, he2017optimal} relaxed the positive semidefinite requirement of the proximal matrix $T$ to be indefinite in some extend, and showed its global convergence.}

In this paper, we suppose that $y^{k+1}$ in \eqref{equ:sADMM2} is easily obtained. For example, if $g(y)=\tau \|y\|_1$ with $\tau>0$ and $S=0$, then $y^{k+1}$ \rev{is calculated} within $O(n)$. Then our main focus is how to solve \eqref{equ:sADMM1} when $n$ is large.
We may choose a reasonable positive semidefinite matrix $T$ so that we get $x^{k+1}$ quickly.

One of such examples of $T$ is $T = \rev{\xi} I -A^\top A$ with $\rev{\xi}> \lambda_{\max}(A^\top A)$, \rev{where $\lambda_{\max}(A^\top A)$ denotes the maximum eigenvalue of $A^\top A$.} Then \eqref{equ:sADMM1} is written as
\vspace{-1mm}
\begin{eqnarray*}
x^{k+1} &=&\arg\min_{x} \left\{ f(x) -\langle \lambda^k, x - y^k\rangle +
\frac{\beta}{2}\|x-y^{k}\|^2+\frac{1}{2}\|x-x^k\|_{T}^2  \right\} \\
&=& \arg\min_{x} \left\{\rev{\langle Ax^{k}-b, Ax \rangle - \langle \lambda^{k}, x\rangle} + \frac{\beta}{2}\|x-y^{k}\|^2+\frac{\rev{\xi}}{2}\|x-x^{k}\|^2 \right\}\\
&=& (\lambda^k + \beta y^k + \rev{\xi} x^k - A^\top A x^k+ A^\top b)/ (\beta + \rev{\xi}).
\end{eqnarray*}
The other example is $T = \rev{\xi} I - \beta I - A^\top A$ with $\rev{\xi}> \lambda_{\max}(\beta I + A^\top A)$. Then \eqref{equ:sADMM1} is written as
\begin{eqnarray*}
x^{k+1}= x^k - \rev{\xi}^{-1}(A^\top A x^k - A^\top b - \lambda^k + \beta x^k - \beta y^k ).
\end{eqnarray*}
In both cases $x^{k+1}$ is calculated within $O(mn)$.
However, since \rev{these} subproblems do not include second-order information on $f$, the convergence of ADMM with such $T$ might be slow.\par

\rev{We want a matrix $T$ to be the one} such that it is positive semidefinite, subproblem \eqref{equ:sADMM1} is easily solved, and it has some \rev{second-order} information on $f$. Let $M$ be the Hessian matrix of the augmented Lagrangian function $\L_\beta$, that is $M \colon = \nabla^2_{xx} \L_{\beta}(x,y,\la) =  A^\top A + \beta I$. Note that $M \succ 0$ whenever $\beta > 0$.
Then, we consider a matrix $B$ that has the following three properties:
\begin{itemize}
{\item[(i)] $T=B - M$;
 \item[(ii)] $B \succeq M $;
 \item[(iii)] $B$ has some second order information on $M$.
}
\end{itemize}
\rev{Note that properties (i) and (ii) imply that $T$ is positive semidefinite. Moreover, subproblem} \eqref{equ:sADMM1} is written as
\begin{eqnarray*}
x^{k+1}&=&\arg\min_{x} \left\{ f(x) -\langle \lambda^k, x - y^k\rangle +
\frac{\beta}{2}\|x-y^{k}\|^2+\frac{1}{2}\|x-x^k\|_{T}^2  \right\} \\
&=&\arg\min_{x} \left\{ \langle A^\top(Ax^{k}-b)+\beta(x^{k}-y^k)-\lambda^{k}, x \rangle
+\frac{1}{2}\|x-x^{k}\|_{B}^2  \right\} \\
&=&x^{k} - B^{-1}\left(A^\top A x^k - A^\top b - \lambda^k + \beta (x^k - y^k) \right).
\end{eqnarray*}

In this paper, we propose to construct \rev{$B^{-1}$} via the BFGS update at every iteration. \rev{Then subproblem \eqref{equ:sADMM1} is easily solved. Note that matrices} $B$ and $T$ at every step depend on $k$, that is, they become $B_k$ and $T_k$, and the resulting ADMM is a variable metric semi-proximal ADMM (short by {\color{blue}VMSP-ADMM}) given as:
\begin{subnumcases}{\label{equ:ADMMB}}
\label{equ:ADB1} x^{k+1}=\arg\min_{x}\ \L_{\beta}(x,y^k,\la^k) + \frac12 \|x - x^{k}\|_{T_k}^2,\\
\label{equ:ADB2} y^{k+1}=\arg\min_{y}\ \L_{\beta}(x^{k+1},y,\la^{k}) + \frac12 \|y - y^{k}\|_{S}^2,\\
\label{equ:ADB3} \la^{k+1}=\la^{k}-\beta(x^{k+1}-y^{k+1}).
\end{subnumcases}

\rev{
VMSP-ADMM is studied in \cite{HLHY2002} where the $T_k$ is assumed to be positive definite. The convergence and complexity results have been studied in \cite{lotito2009class, banert2016fixing, Goncalves2018}.
Moreover it is also closely related to the inexact ADMM, where the subproblems in \eqref{equ:ADMM} or \eqref{equ:sADMM} to be solved approximately with certain implementable criteria \cite{eckstein1992douglas, chen1994proximal, HLHY2002, yuan2005improvement, eckstein2017approximate, eckstein2018relative}.
In this paper, we suppose that subproblems \eqref{equ:ADB1}-\eqref{equ:ADB2} are solved exactly.
}

\medskip
The main contributions of the paper are as follows:
\begin{itemize}
{
\rev{
\item[1.] We propose an update formula on positive semidefinite matrices $T_k$ and $B_k$ via the BFGS update that satisfy the above three properties (i)-(iii).
}
\item[2.] We report some numerical results for the proposed methods which shows that they outperform the existing ADMM when $n$ \rev{and $m$ are} large.
}
\end{itemize}

\rev{
The rest of the paper is organized as follows. In Section \ref{section:convergence}, we propose a new ADMM with the BFGS update, and show its global convergence. In Section \ref{section:numerical}, we present some numerical experiment results for ADMM with the BFGS and limited memory BFGS update. Finally, we make some concluding remarks in Section \ref{section:conclusions}.
}

\medskip
\vspace*{1 ex}
\leftline{$\mathbf{Notations:}$~~Here we give some notation that will be used in \rev{subsequent} sections.}
We define $\langle \cdot, \cdot \rangle$ as the standard inner product in $\R^n$:
$\langle x, y \rangle = \sum_{i=1}^{n} x_i y_i,$ for all $x, y \in \R^n.$
We use $\|\cdot \|$ to denote the 2-norm of a vector:
$\|x\| = \langle x, x \rangle ^{1/2}.$
For a real symmetric matrix ${S}$, we \rev{denote} ${S}\succeq 0$ (${S}\succ 0$) if ${S}$ is positive semidefinite (positive definite).

\medskip
\vspace*{1 ex}
\section{ADMM with the BFGS update and its Convergence analysis}\label{section:convergence}
\rev{In this section, we first propose the updating rule of $T_k$ via the BFGS update for VMSP-ADMM, and show a key property on $T_k$ for the convergence.
Then we establish the global convergence.}

\subsection{Construction of the regularized matrix \texorpdfstring{$T_k$}{TEXT} via the BFGS update}
As discussed in Introduction, we propose to construct $T_k$ as $T_k = B_k - M$, where $M = \nabla^2_{xx} \L_{\beta}(x,y,\la)$. We want $T_k$ to be positive semidefinite for global convergence as \rev{a usual semi-proximal ADMM.} Moreover we want $B_k$ to be as close to $M$ as possible for rapid convergence. To this end, we propose to generate $B_k$ by \rev{the BFGS update} with respect to $M$.
Then we may consider the BFGS update with a given $s \in R^n$ and $l=Ms$. Note that  $s^\top l>0$ when $s\neq 0$. Since BFGS usually constructs the inverse of $B_k$, we let
$H_k = B_k^ {-1}.$
Using $H_k$, we can easily solve subproblem \eqref{equ:ADB1}.

Now we  briefly sketch \rev{the} BFGS \rev{update} and \rev{the limited memory BFGS (L-BFGS) \cite{nocedal1980updating,hale2013introduction}.}
Let $s_{k}=x^{k+1}-x^{k}, l_{k}=M s_{k}$.
Then the BFGS \rev{updates} for $B_{k+1}$ and $H_{k+1}$ are given as
\[\label{bfgsb}
B^{BFGS}_{k+1}=B_{k}+{\frac {{l}_{k}{l}_{k}^\top}{{l}_{k}^\top {s}_{k}}}-{\frac {B_{k}{s}_{k}{s}_{k}^\top B_{k}^\top}{{s}_{k}^\top B_{k}{s}_{k}}},
\]

\[\label{bfgsh}
H^{BFGS}_{k+1}=\left(I-{\frac {s_{k}l_{k}^\top}{s_{k}^\top l_{k}}}\right) H_k \left(I-{\frac {l_{k}s_{k}^\top}{s_{k}^\top l_{k}}}\right)+{\frac {s_{k}s_{k}^\top}{s_{k}^\top l_{k}}}.
\]
Since $ s_{k}^\top l_{k} >0$, $B_{k+1}^{BFGS}$ and $H_{k+1}^{BFGS}$ are positive definite whenever $B_k, H_k \succ 0$.
Moreover
$$l_{k}=B_{k+1}^{BFGS} s_{k}~~ \mathrm{and}~~ s_{k} = H_{k+1}^{BFGS} l_{k}.$$

\rev{The BFGS update} requires only matrix-vector multiplications, which brings the computational cost at each iteration to $O(n^2)$. If the number of variables is very large, even $O(n^2)$ per iteration is too expensive \rev{in terms of both CPU time and memory usage.}\par

A less computationally intensive method is the limited memory BFGS method \cite{nocedal1980updating,hale2013introduction}. Instead of updating and storing the entire approximated inverse Hessian matrix, the L-BFGS method uses \rev{the vectors $(s_i, l_i)$ in} the last $h$ iterations and \rev{constructs $H_{k+1}$ by using these vectors.} The updating in L-BFGS brings the computational cost down to $O(hn)$ per iteration.

\medskip
\vspace*{1 ex}
\subsection{Property of the regularized matrix \texorpdfstring{$T_k$}{TEXT} via the BFGS update}
For the global convergence, we need $T_k = B_k - M \succeq 0$, that is $B_k \succeq M$. Note that $B_k \succeq M $ is equivalent to $H_k \preceq M^{-1}$, \rev{where $H_k = (B_k)^{-1}$}.
We will show that $H_k \preceq M^{-1}$ for all $k$ when the initial matrix $H_0$ satisfies
$$H_0 \preceq M^{-1}.$$

We first show  a technical lemma on $s$ and $l$.
\begin{lemma}
\label{lemma:sy}
Let $s \in R^n$ such that $s\neq 0$.
Moreover let $l=M s$ and $\Phi=\{ z\in R^n\;|\; \langle s, z\rangle =0\}$.
Then for any $v\in R^n$, there exist $c\in R$ and $z\in \Phi$ such that $v=cl+z$.
\end{lemma}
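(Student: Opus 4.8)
The plan is to exhibit the decomposition explicitly, so existence is immediate once one identifies the right constant. The key observation is that
$\langle s, l\rangle = s^\top M s > 0$,
because $M = A^\top A + \beta I \succ 0$ (recall $\beta>0$) and $s \neq 0$; in particular $l \neq 0$ and, crucially, $l \notin \Phi$. This is the only place where positive definiteness of $M$ is used, and it is what makes the construction below well defined.

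Given an arbitrary $v \in \R^n$, I would set
$c := \langle s, v\rangle / \langle s, l\rangle$
and $z := v - cl$. Then a one-line computation gives
$\langle s, z\rangle = \langle s, v\rangle - c\,\langle s, l\rangle = \langle s, v\rangle - \langle s, v\rangle = 0$,
so $z \in \Phi$, and by construction $v = cl + z$. This proves the claim.

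There is essentially no obstacle; the statement is the elementary fact that $\R^n = \mathrm{span}(l) \oplus \Phi$, since $\Phi$ is a hyperplane and $\mathrm{span}(l)$ is a line not contained in it. If desired, one can note that the pair $(c,z)$ is in fact unique: if $cl + z = c'l + z'$ with $z,z' \in \Phi$, then applying $\langle s, \cdot\rangle$ and using $\langle s,l\rangle \neq 0$ forces $c = c'$, hence $z = z'$. The lemma only asserts existence, so this remark is not strictly needed, but it clarifies how the decomposition will be used when splitting an arbitrary direction into a component along $l$ (equivalently, along $s$ after applying $M^{-1}$) and a component orthogonal to $s$ in the subsequent analysis of $H_k$.
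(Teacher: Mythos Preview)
Your proof is correct and rests on the same key fact as the paper's, namely $\langle s,l\rangle = s^\top M s > 0$; the paper instead first decomposes both $v$ and $l$ orthogonally with respect to $s$ and then substitutes $s = \frac{1}{c_2}l - \frac{1}{c_2}z^2$ back into $v = c_1 s + z^1$, arriving at the same conclusion. Your explicit choice $c = \langle s,v\rangle/\langle s,l\rangle$ is more direct and arguably cleaner.
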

\begin{proof}
Let $v\in R^n$.
Then there exist $c_1, c_2 \in R$ and $z^1, z^2\in \Phi$
such that $v=c_1 s+z^1$ and $l=c_2 s +z^2$.  Since $s^\top l>0$, \rev{we have} $c_2 \neq 0$.
Thus $s=\frac{1}{c_2} l-\frac{1}{c_2} z^2$. Substituting it into $v=c_1 s+z^1$ yields
\begin{align*}
v=c_1\left( \frac{1}{c_2} l-\frac{1}{c_2} z^2\right)+z^1=\frac{c_1}{c_2} l+z^1-\frac{c_1}{c_2}z^2.
\end{align*}
Let $c=\frac{c_1}{c_2}$ and $z=z^1-\frac{c_1}{c_2}z^2$. Then $z\in \Phi$ and $v=cl+z$.
\end{proof}

Recall the BFGS \rev{update} (\ref{bfgsh}) is rewritten as
\[\label{eqn:HVS}
\rev{
H_{\rm next} = H - \frac{H l s^\top + s l^\top H} {s^\top l} + \left(1 + \frac{l^\top H l}{s^\top l}\right) \frac{s s^\top}{s^\top l},
}
\]
where $H$ is the proximal matrix for the current step and $H_{\rm next}$ is the new matrix generated via BFGS update.
Moreover we have
\[\label{eqn:HyAy}
H_{\rm next} l =s= M^{-1} l.\]

The following theorem will play a key role \rev{for} the global convergence \rev{of the proposed} method.
\begin{theorem}
\label{theorem:basic}
Let $s \in R^n$ such that $s\neq 0$,  and let $l=M s$.
If $H \preceq M^{-1}$, then $H_{ next} \preceq M^{-1}$.
\end{theorem}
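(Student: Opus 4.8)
The plan is to show that $M^{-1} - H_{\rm next} \succeq 0$ by evaluating the quadratic form $v^\top (M^{-1} - H_{\rm next}) v$ for an arbitrary $v \in \R^n$ and checking it is nonnegative. The natural device is Lemma \ref{lemma:sy}: write $v = c\,l + z$ with $c \in \R$ and $z \in \Phi$, i.e. $\langle s, z\rangle = 0$. The point of this decomposition is that $l = Ms$, so $M^{-1} l = s$, and on the other hand $H_{\rm next} l = s$ by \eqref{eqn:HyAy}; hence both $M^{-1}$ and $H_{\rm next}$ act the same way on the ``$l$-direction''. This should make all the cross terms between $cl$ and $z$ collapse nicely and leave only a term involving $z$.

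Concretely, I would expand
\[
v^\top(M^{-1} - H_{\rm next})v = c^2\, l^\top(M^{-1}-H_{\rm next})l + 2c\, l^\top(M^{-1}-H_{\rm next})z + z^\top(M^{-1}-H_{\rm next})z.
\]
The first term vanishes because $(M^{-1}-H_{\rm next})l = s - s = 0$. For the middle term, $l^\top(M^{-1}-H_{\rm next})z = ((M^{-1}-H_{\rm next})l)^\top z = 0$ by symmetry of both matrices and the same identity. So everything reduces to $z^\top(M^{-1}-H_{\rm next})z$ with $z \in \Phi$. Now I would compute $z^\top H_{\rm next} z$ from the explicit formula \eqref{eqn:HVS}: since $s^\top z = 0$, every term in \eqref{eqn:HVS} that carries a factor $s^\top(\cdot)$ kills against $z$, so $z^\top H_{\rm next} z = z^\top H z$. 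Therefore
\[
v^\top(M^{-1}-H_{\rm next})v = z^\top(M^{-1} - H)z \ge 0,
\]
where the final inequality is exactly the hypothesis $H \preceq M^{-1}$ applied to the vector $z$. That completes the argument.

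The main thing to be careful about — rather than a deep obstacle — is the bookkeeping in the middle step: one must use both that $M^{-1}$ and $H_{\rm next}$ are symmetric and that they agree on $l$, so that $l^\top(M^{-1}-H_{\rm next})z = 0$ even though $z$ is not assumed orthogonal to anything except $s$. It is worth stating explicitly that $M$ is symmetric positive definite (so $M^{-1}$ is symmetric) and that $H_{\rm next}$ is symmetric by construction from \eqref{eqn:HVS}. The other point to verify cleanly is that in \eqref{eqn:HVS} the three correction terms beyond $H$ are $-\frac{Hls^\top + sl^\top H}{s^\top l}$ and $(1 + \frac{l^\top H l}{s^\top l})\frac{ss^\top}{s^\top l}$, each of which, when sandwiched as $z^\top(\cdot)z$, contains a factor $s^\top z = 0$; hence $z^\top H_{\rm next}z = z^\top H z$ with no residual terms. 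Once these two routine checks are in place, the theorem follows immediately, and in fact the same computation shows $H_{\rm next} \preceq M^{-1}$ with equality precisely on $\mathrm{span}\{s\}$ plus wherever $H = M^{-1}$ already held on $\Phi$.
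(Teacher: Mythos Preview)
Your proposal is correct and follows essentially the same approach as the paper: both use Lemma~\ref{lemma:sy} to decompose $v=cl+z$ with $s^\top z=0$, exploit $H_{\rm next}l=M^{-1}l=s$ from \eqref{eqn:HyAy}, and verify from \eqref{eqn:HVS} that $z^\top H_{\rm next}z=z^\top Hz$ before invoking the hypothesis $H\preceq M^{-1}$. The only difference is cosmetic organization---you work directly with the difference $M^{-1}-H_{\rm next}$, while the paper expands $v^\top H_{\rm next}v$ and $v^\top M^{-1}v$ separately and then compares them.
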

\begin{spacing}{1.3}
\begin{proof}
Let $v$ be an arbitrary nonzero vector in $R^n$.
Let $\Phi=\{ z\in R^n\;|\; \langle s, z\rangle =0\}$.
From Lemma \ref{lemma:sy} there exist $c \in R$ and $z\in \Phi$ such that
$v=cl+z$.
It then follows from (\ref{eqn:HyAy}) and the definition of $z$ that
\begin{eqnarray*}
v^\top H_{\rm next} v &=& (cl+z)^\top H_{\rm next} (cl+z) \\
&=& c^2 l^\top s + 2 cs^\top z + z^\top  H_{\rm next} z \\
&=& c^2 l^\top s + z^\top  H_{\rm next} z  \\
&=& c^2 l^\top  M^{-1} l+ z^\top  H_{\rm next} z.
\end{eqnarray*}
We now consider the last term of the right-hand side of the last equation. Since $z\in \Phi$,
we have
$$
 z^\top \left( \frac{sl^\top}{s^\top l}  H \frac{ls^\top}{s^\top l} \right) z= 0,
$$
$$
 z^\top \left( \frac{sl^\top}{s^\top l}  H \right)  z =0
$$
and
$$
 z^\top \left( \frac{ss^\top}{s^\top l} \right) z = 0.
$$
It then follows from (\ref{eqn:HVS}) that
$$
z^\top H_{\rm next} z = z^\top \rev{H} z - 2 z^\top \left( \frac{s l^\top}{s^\top l} H\right) z + z^\top \left( \frac{s l^\top}{s^\top l} H \frac{l s^\top}{s^\top l}\right) z + \frac{z^\top s s^\top z}{s^\top l}= z^\top H z.
$$
Moreover equation (\ref{eqn:HyAy}) implies
$$
l^\top  M^{-1} z= s^\top z =0.
$$
Consequently we have
\begin{eqnarray*}
v^\top H_{\rm next} v &=&c^2 l^\top   M^{-1} l+ z^\top  H z \\
&\leq &c^2 l^\top  M^{-1} l+ z^\top    M^{-1} z \\
&=& (cl+z)^\top  M^{-1} (cl+z) - 2 cl^\top M^{-1} z \\
&=& v^\top   M^{-1} v,
\end{eqnarray*}
where the inequality follows from the assumption.
Since $v$ is arbitrary, we have $H_{\rm next} \preceq M^{-1}$.
\end{proof}
\end{spacing}
This theorem shows that if $H_0 \preceq M^{-1}$, then $H_k \preceq M^{-1}$, and hence $T_k \succeq 0$.

\subsection{The variable metric semi-proximal ADMM with BFGS and its global convergence} \label{subsection:convergence}
\rev{We propose the following variable metric semi-proximal ADMM with the BFGS update algorithm (ADM-BFGS).}
\IncMargin{1em}
\begin{algorithm}
    \SetKwInOut{Input}{\textbf{Input}}\SetKwInOut{Output}{\textbf{Output}}

    \Input{
        data matrix $A$, initial point $(x^0, y^0, \la^0)$, penalty parameter $\beta$, maxIter;\\
        initial matrix $H_0 \preceq M^{-1}$, constant $\bar{k} \in [1, \infty]$, stopping criterion $\epsilon$.\\}
    \Output{
        \\
        approximative solution $(x^k, y^k, \la^k)$ \\}
    \BlankLine
    initialization\;
     \While{$k< \mathrm{maxIter}$ or not convergence 
     }{

                \eIf{$k \leq \bar{k}$ \rev{$\mathrm{and}$ $x_k - x_{k-1}\neq 0$}}
                {
                        {\bf{update}} $H_{k}$ via BFGS (or L-BFGS) with the initial matrix $H_0$;
                }
                {
                    $H_{k} = H_{k-1}$\;
                }
                {\bf{update}} $x^{k+1}$ by solving the $x-$subproblem:
                $x^{k+1} = x^{k} + H_{k}\left( \lambda^k + \beta y^k + A^\top b - M x^k \right);$

        \nosemic {\bf{update}}  $y^{k+1}$ by solving the $y-$subproblem: \;
        \pushline\dosemic\nonl  $\qquad \;y^{k+1} = \arg\min_{y} \left\{ g(y) - \langle \lambda^k, x^{k+1} - y\rangle + \frac{\beta}{2}\|x^{k+1}-y\|^2+\frac{1}{2}\|y-y^k\|_{S}^2  \right\};$

        \popline {\bf{update}} \rev{Lagrange multipliers:} $\la^{k+1} = \la^k - \beta(x^{k+1} - y^{k+1}).$
        }
   \NoCaptionOfAlgo
   \caption{Variable metric semi-proximal ADMM with the BFGS update (ADM-BFGS)\label{al}}
\end{algorithm}
\DecMargin{1em}
%
%
%
%
\vfill
\newpage
\rev{We now develop a general convergence result for variable metric semi-proximal ADMM \rev{\eqref{equ:ADMMB} for problem \eqref{cp} with a general convex function $f$}. Let $\Omega^*$ be a set of $(x^*, y^*, \la^*)$ satisfying the KKT condition of problem \eqref{cp}.
We assume that $\Omega^*$ is non-empty.}
We give some conditions for sequence $\{T_k\}$ that should be obeyed to guarantee the \rev{global} convergence.
\begin{condition}
\label{cond}
For a sequence $\{T_k\}$ in framework \eqref{equ:ADMMB}, there exist $T \succeq 0$ and a sequence $\{\gamma_k\}$ such that
\begin{description}
{\item[(i)] ~~ $T \preceq T_{k+1} \preceq (1+\gamma_k) T_k$ \ for all \ $k$,
 \item[(ii)] ~~ $\sum\limits_{0}^{\infty} \gamma_k < \infty$ \ and \ $\gamma_k \geq 0$ \ for all \ $k$.
}
\end{description}
\end{condition}
\rev{Under the conditions, we have the following convergence result.}
\begin{theorem}
\label{theo:conv}
Let $\{(x^k, y^k, \lambda^k)\}$ be generated by \eqref{equ:ADMMB}, and let $\{T_k\}$ be a sequence satisfying Condition \ref{cond}. Then sequence $\{(x^k, y^k, \lambda^k)\}$ converges to a point $(x^*, y^*, \la^*) \in \Omega^*$.
\end{theorem}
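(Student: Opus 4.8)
The plan is to adapt the classical quasi-Fej\'er convergence proof for the (semi-)proximal ADMM, the only genuinely new point being the bookkeeping for the varying metric $T_k$ that Condition~\ref{cond} is designed to control.

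First I would write down the first-order optimality conditions for the subproblems \eqref{equ:ADB1}--\eqref{equ:ADB2}. Eliminating $\la^{k}-\beta(x^{k+1}-y^{k})$ by means of the multiplier update \eqref{equ:ADB3}, these read
$$
\la^{k+1}-\beta(y^{k+1}-y^{k})-T_{k}(x^{k+1}-x^{k})\in\partial f(x^{k+1}),\qquad -\la^{k+1}-S(y^{k+1}-y^{k})\in\partial g(y^{k+1}),
$$
together with the residual identity $x^{k+1}-y^{k+1}=-\tfrac1\beta(\la^{k+1}-\la^{k})$, while $(x^{*},y^{*},\la^{*})\in\Omega^{*}$ amounts to $\la^{*}\in\partial f(x^{*})$, $-\la^{*}\in\partial g(y^{*})$, $x^{*}=y^{*}$. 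Fixing such a KKT point, I would subtract the KKT inclusions from the ones above, apply monotonicity of $\partial f$ and $\partial g$, add the two resulting inner products, substitute $x^{*}=y^{*}$ and the residual identity, and complete squares via the three-point identity on the penalty and proximal terms. As in the standard semi-proximal analysis \cite{fazel2013hankel,he2012convergence}, this produces a one-step estimate
$$
\tfrac1\beta\|\la^{k+1}-\la^{*}\|^{2}+\|y^{k+1}-y^{*}\|_{\beta I+S}^{2}+\|x^{k+1}-x^{*}\|_{T_{k}}^{2}\ \le\ \phi_{k}-\delta_{k},
$$
where $\phi_{k}:=\tfrac1\beta\|\la^{k}-\la^{*}\|^{2}+\|y^{k}-y^{*}\|_{\beta I+S}^{2}+\|x^{k}-x^{*}\|_{T_{k}}^{2}$ and $\delta_{k}\ge0$ is a nonnegative combination (up to fixed positive constants) of $\|\la^{k+1}-\la^{k}\|^{2}$, $\|y^{k+1}-y^{k}\|_{\beta I+S}^{2}$ and $\|x^{k+1}-x^{k}\|_{T_{k}}^{2}$ that does not depend on $(x^{*},y^{*},\la^{*})$. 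Replacing $T_{k}$ by $T_{k+1}$ in the $x$-term through Condition~\ref{cond}(i), i.e.\ $\|x^{k+1}-x^{*}\|_{T_{k+1}}^{2}\le(1+\gamma_{k})\|x^{k+1}-x^{*}\|_{T_{k}}^{2}$, and using that $\|x^{k+1}-x^{*}\|_{T_{k}}^{2}$ is dominated by the left-hand side above and hence by $\phi_{k}$, I obtain the quasi-Fej\'er inequality
$$
\phi_{k+1}\ \le\ (1+\gamma_{k})\,\phi_{k}-\delta_{k}.
$$

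Next I would invoke Condition~\ref{cond}(ii). Since $\sum_{k}\gamma_{k}<\infty$, the partial products $\Gamma_{k}:=\prod_{j<k}(1+\gamma_{j})$ have a finite limit, and dividing the last inequality by $\Gamma_{k+1}$ makes $\{\phi_{k}/\Gamma_{k}\}$ nonincreasing; the Robbins--Siegmund-type conclusion is that $\{\phi_{k}\}$ is bounded, $\lim_{k}\phi_{k}$ exists, and $\sum_{k}\delta_{k}<\infty$, so $\delta_{k}\to0$. Boundedness of $\{\phi_{k}\}$ yields boundedness of $\{(\la^{k},y^{k})\}$ (as $\beta I+S\succ0$), hence of $\{x^{k}\}$ through $x^{k}-y^{k}=-\tfrac1\beta(\la^{k}-\la^{k-1})$. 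From $\delta_{k}\to0$ I read off $\la^{k+1}-\la^{k}\to0$ (equivalently $x^{k+1}-y^{k+1}\to0$ and $x^{k}-y^{k}\to0$), $y^{k+1}-y^{k}\to0$, and $\|x^{k+1}-x^{k}\|_{T_{k}}\to0$; since Condition~\ref{cond}(i) gives $T_{k}\preceq\bigl(\prod_{j\ge0}(1+\gamma_{j})\bigr)T_{0}$ and each $T_{k}\succeq0$, we have $\sup_{k}\|T_{k}\|<\infty$ and therefore $\|T_{k}(x^{k+1}-x^{k})\|^{2}\le\|T_{k}\|\,\|x^{k+1}-x^{k}\|_{T_{k}}^{2}\to0$. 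Passing to a convergent subsequence $(x^{k_{j}},y^{k_{j}},\la^{k_{j}})\to(\bar x,\bar y,\bar\la)$ and letting $j\to\infty$ in the two subgradient inclusions, using closedness of the graphs of $\partial f$ and $\partial g$ and the vanishing of $\la^{k+1}-\la^{k}$, $y^{k+1}-y^{k}$ and $T_{k}(x^{k+1}-x^{k})$, I get $\bar\la\in\partial f(\bar x)$, $-\bar\la\in\partial g(\bar y)$, and $\bar x=\bar y$ (from $x^{k}-y^{k}\to0$), i.e.\ $(\bar x,\bar y,\bar\la)\in\Omega^{*}$. Finally, since the quasi-Fej\'er inequality holds for \emph{every} reference point of $\Omega^{*}$, I apply it with $(x^{*},y^{*},\la^{*})=(\bar x,\bar y,\bar\la)$: then $\lim_{k}\phi_{k}(\bar x,\bar y,\bar\la)$ exists and, since $\phi_{k_{j}}(\bar x,\bar y,\bar\la)\to0$ along the subsequence (each term vanishing, the $x$-term because $\sup_{k}\|T_{k}\|<\infty$), this limit is $0$. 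Hence $\la^{k}\to\bar\la$ and $\|y^{k}-\bar y\|_{\beta I+S}\to0$, so $y^{k}\to\bar y$, and then $x^{k}=y^{k}+(x^{k}-y^{k})\to\bar y=\bar x$; the whole sequence converges to $(\bar x,\bar y,\bar\la)\in\Omega^{*}$.

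The step I expect to be hardest is the one-step estimate itself: one must organise the cross-term cancellations and square completions so that the residual $\delta_{k}$ is (i) genuinely nonnegative, (ii) independent of the reference point $(x^{*},y^{*},\la^{*})$, and (iii) still equipped with the term $\|x^{k+1}-x^{k}\|_{T_{k}}^{2}$, which is exactly what forces $T_{k}(x^{k+1}-x^{k})\to0$ and hence permits passing to the limit in the $f$-inclusion for a merely convex $f$; at the same time, the metric change between consecutive iterations must be confined to the single factor $1+\gamma_{k}$. The remaining ingredients (the Robbins--Siegmund-type lemma, the subsequence extraction, and the Opial-type upgrade from subsequential to full convergence) are routine once that inequality is in hand.
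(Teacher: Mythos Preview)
Your proposal is correct and follows essentially the same route as the paper: a one-step Fej\'er-type descent inequality in the block metric $G_k=\mathrm{diag}(T_k,\,S+\beta I,\,\tfrac1\beta I)$, the metric change $T_k\to T_{k+1}$ absorbed into a factor $(1+\gamma_k)$ via Condition~\ref{cond}(i), summability of $\gamma_k$ to get boundedness and vanishing residuals, extraction of a KKT cluster point through closedness of the subdifferential graphs, and the Opial-type upgrade to full convergence by re-centering at that cluster point.

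The only organizational difference is in the residual term. The paper's descent quantity (Lemma~\ref{lemma:con}(i), quoted from \cite{HLHY2002}) is $\|u^{k+1}-u^k\|_{D_k}^2+\beta\|x^{k+1}-y^k\|^2$, i.e.\ it carries $\|y^{k+1}-y^k\|_S^2$ and $\|x^{k+1}-y^k\|^2$ rather than your $\|y^{k+1}-y^k\|_{\beta I+S}^2$ and $\|\lambda^{k+1}-\lambda^k\|^2$; the paper then invokes a separate bound (Lemma~\ref{lemma:con}(ii)) to control the full KKT residual $\|F^{k+1}\|$ by this descent quantity, instead of reading each piece off directly from $\delta_k$. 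Your form is cleaner for passing to the limit but requires handling the cross term $\langle\lambda^{k+1}-\lambda^k,\,y^{k+1}-y^k\rangle$ (e.g.\ via monotonicity of $\partial g$ at consecutive iterates), which for general $S\succeq0$ introduces a $y^{k-1}$-dependence; the paper's form avoids this at the cost of the auxiliary Lemma~\ref{lemma:con}(ii). Either organization works, and you have correctly flagged this step as the one requiring care.
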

\begin{proof}
See Appendix \ref{section:convergence property}.
\end{proof}
\rev{
\begin{remark}
Note that Condition \ref{cond} is similar to \cite[Condition C.]{HLHY2002} and \cite[Condition C2]{Goncalves2018}. When sequence $\{T_k\}$ is positive definite, the proof of the global convergence can be found in \cite{HLHY2002}. In the above assumption, the proximal term $T_k$ is assumed to be merely positive semidefinite. Thus we give a proof in Appendix \ref{section:convergence property}.
\end{remark}
}
\rev{Now we give the global convergence of ADM-BFGS as a consequence of Theorem \ref{theo:conv}}.
\begin{theorem}
\label{convergence}
Suppose that sequence $\{T_k\}$ is generated by the BFGS (or L-BFGS) update with $M$. Suppose also that $\{T_k\}$ satisfies Condition \ref{cond}. Then sequence $\{(x^k, y^k, \lambda^k)\}$ generated by ADM-BFGS converges to a point $(x^*, y^*, \la^*) \in \Omega^*$.
\end{theorem}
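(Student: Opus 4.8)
The plan is to obtain Theorem~\ref{convergence} as an immediate corollary of Theorem~\ref{theo:conv}: all that has to be checked is that ADM-BFGS is a bona fide instance of the framework~\eqref{equ:ADMMB} whose proximal sequence $\{T_k\}$ is positive semidefinite, and then to invoke Theorem~\ref{theo:conv} (the remaining hypothesis, that $\{T_k\}$ satisfies Condition~\ref{cond}, is assumed outright, as is $\Omega^* \neq \emptyset$).

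First I would identify the matrix-free $x$-update of ADM-BFGS with the proximal $x$-subproblem~\eqref{equ:ADB1}. Put $B_k = H_k^{-1}$ and $T_k = B_k - M$, where $M = A^\top A + \beta I \succ 0$ since $\beta > 0$. The objective of \eqref{equ:ADB1}, as a function of $x$, is $f(x) - \langle \lambda^k, x - y^k\rangle + \frac{\beta}{2}\|x - y^k\|^2 + \frac12\|x - x^k\|_{T_k}^2$ up to a constant; its first-order optimality condition, after the terms $A^\top A x + \beta x$ cancel against $Mx$, reduces to $B_k x = B_k x^k - M x^k + A^\top b + \lambda^k + \beta y^k$. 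Because $\beta I \succ 0$ makes this subproblem strictly convex, its unique minimizer is $x^{k+1} = x^k + H_k(\lambda^k + \beta y^k + A^\top b - M x^k)$, which is exactly the update performed by ADM-BFGS. The $y$-update and the multiplier update of ADM-BFGS are \eqref{equ:ADB2} and \eqref{equ:ADB3} verbatim. Hence the iterates of ADM-BFGS coincide with those of \eqref{equ:ADMMB} for this $\{T_k\}$, and $f(x) = \frac12\|Ax-b\|^2$ is convex, so Theorem~\ref{theo:conv} will apply once its hypotheses are in place.

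Next I would verify $T_k \succeq 0$ for all $k$ by induction. The base case is the input assumption $H_0 \preceq M^{-1}$, taken together with $H_0 \succ 0$ (so that $B_0$ is defined). For the step: if ADM-BFGS sets $H_k = H_{k-1}$ there is nothing to do; otherwise $H_k$ is obtained from $H_{k-1}$ by one BFGS update, or (in the L-BFGS case) rebuilt from $H_0$ by composing the at most $h$ elementary BFGS updates associated with the stored pairs $(s_i, l_i)$, $l_i = M s_i$. Every such elementary update is of the form~\eqref{eqn:HVS} with $l = Ms$, so Theorem~\ref{theorem:basic} applies to it and preserves the relation $H \preceq M^{-1}$; moreover each stored $s_i$ is nonzero (ADM-BFGS only updates, hence only stores a pair, when $x_k - x_{k-1} \neq 0$), so $s_i^\top l_i = s_i^\top M s_i > 0$ and the updates remain positive definite, keeping $B_k = H_k^{-1}$ well defined. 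Applying Theorem~\ref{theorem:basic} once, or $h$ times, gives $H_k \preceq M^{-1}$; since inversion reverses the order among positive definite matrices, this is equivalent to $B_k \succeq M$, whence $T_k = B_k - M \succeq 0$.

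Finally, $\{T_k\}$ is positive semidefinite and, by hypothesis, satisfies Condition~\ref{cond}, and the ADM-BFGS iterates are those of \eqref{equ:ADMMB} with this $\{T_k\}$; Theorem~\ref{theo:conv} then gives convergence of $\{(x^k, y^k, \lambda^k)\}$ to some $(x^*, y^*, \lambda^*) \in \Omega^*$. I do not expect a real obstacle here: the substantive ingredients --- preservation of positive semidefiniteness (Theorem~\ref{theorem:basic}) and convergence of the abstract scheme under Condition~\ref{cond} (Theorem~\ref{theo:conv}, proved in the appendix) --- are already established, and the content of this theorem is essentially the identification of the algorithm with the framework. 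The one point to be careful about is that the L-BFGS variant rebuilds $H_k$ from $H_0$ rather than from $H_{k-1}$, so one must confirm that it still only ever composes elementary BFGS updates to which Theorem~\ref{theorem:basic} applies, so that $H_k \preceq M^{-1}$, and hence $T_k \succeq 0$, persists regardless of how many correction pairs are kept.
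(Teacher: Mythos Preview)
Your proof is correct and follows the same approach as the paper: the paper's proof is the single line ``The theorem directly follows from Theorem~\ref{theo:conv},'' and your argument is precisely the fleshed-out version of that invocation. Note, though, that your inductive verification of $T_k\succeq 0$ via Theorem~\ref{theorem:basic} is redundant here, since the assumed Condition~\ref{cond} already forces $T_k\succeq T\succeq 0$ for all $k$; the positive-semidefiniteness result is what motivates and enables Condition~\ref{cond}, but once that condition is taken as a hypothesis it is not needed again in the proof.
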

\begin{proof}
The theorem directly follows from Theorem \ref{theo:conv}.
\end{proof}
\rev{
We currently cannot show that $\{T_k\}$ satisfy Condition \ref{cond} when $\{H_k\}$ is updated by a pure BFGS (or L-BFGS) update and $\bar{k} = \infty$. Hence we give the following two remedies for $T_k$ to be satisfied Condition \ref{cond}.
 \begin{description}
{\item[\color{blue}Remedy 1:]
We let $\bar{k}$ be finite. Then the updating of $B_k$ stopped at $\bar{k}$, that is
\vspace{-1mm}
$$B_{k} = B_{\bar{k}},\;T_{k} = T_{\bar{k}}~~~~ \mathrm{for~~all}~~  k \geq \bar{k},$$
i.e., $\gamma_k = 0$ in Condition \ref{cond} when $ k \geq \bar{k}$. Thus, it is reasonable to say that the sequence $\{T_k\}$ generated by ADM-BFGS and some existing $\{\gamma_k\}$ satisfy the Condition \ref{cond}. Note that the resulting ADM-BFGS becomes ADMM \eqref{equ:sADMM} with $T = T_{\bar{k}}$ for large $k$.

\medskip
 \item[\color{blue}Remedy 2:]  Suppose that $\bar{k} = \infty$. We generate $\{B_k\}$ as follows:
 \vspace{-1mm}
\[\label{cond2}
B_{k+1}=B_{k}+ c_k \left(\frac {{\tilde{l}}_{k}{\tilde{l}}_{k}^\top}{{\tilde{l}}_{k}^\top {s}_{k}} -{\frac {B_{k}{s}_{k}{s}_{k}^\top B_{k}^\top}{{s}_{k}^\top B_{k}{s}_{k}}} \right),
\vspace{-1mm}\]
where $\tilde{l}_k = M s_k + \delta s_k$ with $\delta > 0$, and $\{c_k\}$ is a sequence such that $c_k \in [0,1],$ and $\sum\limits_{k=0}^{\infty} c_k < \infty$.

Now we show that Condition \ref{cond} holds when $B_0 \succeq M + \delta I$. Suppose that $B_0 \succeq M + \delta I$.
Note that $B_{k+1} = B_k + c_k( \bar{B}_{k+1} - B_k)$, where $\bar{B}_{k+1} $ is updated by the pure BFGS update \eqref{bfgsb} with $s_k$ and $\tilde{l}_k$. From Theorem \ref{theorem:basic}, $\bar{B}_{k+1}  \succeq M +\delta I$ when $B_k \succeq M + \delta I$. Since $B_{k+1} = c_k \bar{B}_{k+1} + (1-c_k) B_k$, we have $B_{k+1} \succeq  M + \delta I$, and hence $T_{k+1} = B_{k+1} - M \succeq \delta I \succ 0$.
Therefore the first matrix inequality in Condition \ref{cond} (i) holds.

Next we show the second inequality in Condition \ref{cond} (i) holds. Note that ${s}_{k}^\top B_{k}{s}_{k} \geq \delta \|s_k\|^2$, $\tilde{l}_k^\top s_k = s_k^\top M s_k + \delta \|s_k\|^2 \geq \delta \|s_k\|^2$, and $M$ is the constant matrix. Therefore, $\|\bar{B}_{k+1} - B_k\|$ is bounded above by some $Q>0$, that is, $\|\bar{B}_{k+1} - B_k\| \leq Q$. Then we have
\vspace{-1mm}
$$
c_k( \bar{B}_{k+1} - B_k) \preceq c_k \|\bar{B}_{k+1} - B_k\| \cdot I \preceq c_k \frac Q \delta \cdot \delta I
\preceq c_k \frac Q \delta T_k.
$$
Therefore,
\vspace{-2mm}
\begin{align*}
T_{k+1} ={}& B_{k+1} - M \nn \\
={} & B_k + c_k( \bar{B}_{k+1} - B_k) - M\nn \\
={} & T_k + c_k( \bar{B}_{k+1} - B_k)  \nn \\
\preceq{} & T_k + \frac {c_k Q} {\delta} T_k \nn \\
={} & (1 + \frac {c_k Q} {\delta}) T_k.
\end{align*}
Let $\gamma_k = \frac Q \delta c_k$. Then $T_{k+1} \preceq (1+\gamma_k) T_k$.

Finally we show that Condition \ref{cond} (ii) holds. From the definition of $\gamma_k$, we have $\sum\limits_{k=0}^{\infty} \gamma_k = \frac Q \delta \sum\limits_{k=0}^{\infty} c_k < \infty$.
}
\end{description}
We will present numerical results for BFGS with Remedy 2 and L-BFGS with Remedy 1 in the next section.
}


\section{Numerical results}\label{section:numerical}

In this section, we demonstrate the potential efficiency of our method by some numerical experiments.
\rev{All the experiments are implemented by Matlab R2018b on Windows 10 pro with a 2.10 GHz Intel Xeon E5-2620 v4 processor and 128 GB of RAM.}

\subsection{Detail settings in the numerical experiments}\label{num:set}
\rev{In this subsection, we give the detail settings in the numerical experiments.}

     \subsubsection{Test problems}  \label{num:subsub:pro}
 We consider \rev{to solve} the Lasso problem:
 \vspace{-1mm}
\[\label{prob:lasso}
 \min_{x \in \R^n} \ \frac{1}{2}\|Ax-b\|_2^{2}+ \tau\|x\|_1,
 \]
where
 \begin{itemize}
{\item~~  $A \in \R^{m \times n}$ is \rev{a} given data matrix;
 \item~~  $x \in \R^n$ is \rev{a} vector of feature coefficients to be estimated;
 \item~~  $b \in \R^m$ is \rev{an} observation vector and $\tau \in \R$ is \rev{a positive} regularization parameter;
 \item~~  $m$ is the number of data points, and $n$ is the number of features.
}
\end{itemize}
By introducing an auxiliary variable $y\in \R^{n}$, we reformulate problem \eqref{prob:lasso} as
\[\label{prob:lasso2}
\min_{ x\in \R^{n},\  y\in \R^{n}} \  \ \frac{1}{2}\|Ax- b\|_2^{2}+ \tau \|y\|_1 \quad \st \quad  x-y=0.
\]

\rev{We randomly generate $A$ and $b$ as follows. We first randomly choose $\bar{x}\in \R^n$ with the sparsity $s$, i.e., the number of nonzero elements in $\bar{x}$ over $n$ is $s$. The Matlab code is given as}
\begin{verbatim}
       xbar = sprandn(n,1,s).
\end{verbatim}
\rev{We generate $A$ by the standard normal $\N(0,1)$ distribution whose sparsity density is $p$:}
\begin{verbatim}
       A = sprandn(m,n,p). % N(0,1) with the density p
\end{verbatim}
\rev{Then we calculate $b = A \bar{x} + \varrho$, where $\varrho$ is a noise under $\N(0,10^{-3})$ distribution. The Matlab code is}
\begin{verbatim}
       b = A*xbar + sqrt(0.001)*randn(m,1).
\end{verbatim}
\rev{The regularization parameter is set to $\tau = 0.1 \tau_{max}$, where $\tau_{max} = \|A^\top b\|_{\infty}$:}
\begin{verbatim}
        tau = 0.1* norm(A'*b, 'inf').
\end{verbatim}

 \subsubsection{Test ADMMs} \label{num:subsub:admm}
 \rev{In the numerical experiments, we test the following 7 ADMMs. The differences of the ADMMs are choices of the proximal term $T_k$ in $x$-subproblems.
        \begin{description}
    {

        \item[{\color{blue}ADM-OPT:}] the classical ADMM \cite{gabay1976dual,glowinski1975approximation}. ADM-OPT solves the original subproblem \eqref{alx} exactly, that is, $T_k=0$ for all $k$;

       \item[{\color{blue}ADM-SPRO:}] the semi-proximal ADMM in \cite{fazel2013hankel}. A positive semidefinite matrix $T_k$ is chosen as
\[\label{proxt}
T_k = \xi I  - \beta I  - A^{\Tsf}A \ \mbox{with}\
\xi = \kappa_1 * \lambda_{\max} \left( \beta I +  A^{\Tsf}A\right),\;\; \kappa_1 >1, \;\; \mathrm{for\ all} \;\; k;
\]
  \item[{\color{blue}ADM-IPRO:}] the indefinite proximal ADMM based on \rev{\cite{li2016majorized, he2017optimal}}. An indefinite proximal matrix $T_k$ is chosen as
\[\label{proxi}
T_k = \xi I - A^{\Tsf}A \ \mbox{with}\
\xi = \kappa_2 * \lambda_{\max} \left( A^{\Tsf}A\right), \;\; \kappa_2 > 0.75,  \;\; \mathrm{for\ all} \;\; k;
\]
       \item[{\color{blue}ADM-BFGS:}] the proximal ADMM with the BFGS update with $\bar{k}=\infty$. An initial matrix of $B_k$ (or $H_k$) is given as
\[\label{proxb}
{\color{blue}B_0 = \rev{\xi} I, \;\; \rev{\xi} = \rev{\kappa_3}*\lambda_{\max}(\beta I + A^\top A)}, \;\; \kappa_3 > 0.75;
\]
       \item[{\color{blue}ADM-LBFGS:}] the proximal ADMM with the L-BFGS update with $\bar{k}=\infty$. The initial semidefinite proximal matrix for limited memory BFGS is the same as \eqref{proxb}. Note that $H_0 = \frac{1}{\rev{\xi}} I$. We fix $H_0^k = H_0$ of each updating step for L-BFGS matrix;
       \item[{\color{blue}ADM-BFGS-R:}] the proximal ADMM with Remedy 2 given in Subsection \ref{section:convergence};
       \item[{\color{blue}ADM-LBFGS-R:}] the proximal ADMM with Remedy 1, that is ADM-LBFGS with $\bar{k} < \infty$.
    }
    \end{description}
 }
\rev{These ADMMs except for ADM-OPT need the maximum eigenvalues $\lambda_{\max} \left( \beta I +  A^\top A\right)$ and $\lambda_{\max} \left(A^\top A\right)$.
        We adopt the following Matlab codes to compute these eigenvalues:}
\begin{verbatim}
            eig_max = svds(A,1)^2 + beta;
            eig_max = svds(A,1)^2.
\end{verbatim}
\rev{While ADM-OPT must solve the unconstrained quadratic optimization \eqref{alx}.
We use a Cholesky factorization for solving it.
When $m= n$, we use ``chol'' in Matlab for $(A^\top A + \beta I)$. When $A$ is fat (i.e., $m<n$), we apply the Sherman-Morrison formula to $(\beta I +  A^\top A)^{-1}$ as
$$
(\beta I +  A^\top A)^{-1} = \frac 1\beta I - \frac{1}{\beta^2} \cdot A^\top \cdot\left( I + \frac 1\beta A A^\top \right)^{-1} \cdot A,
$$
and compute the factorization $L L^\top$ of a smaller matrix $(I + (1/\beta)A A^\top)$ by the ``chol'' function.
Then the $x$-subproblems are solved as}
\begin{verbatim}
            q = A'*b + lambda + beta*y;
            x = q/beta - (A'*(L' \ ( L \ (A*q) )))/beta^2.
\end{verbatim}
\rev{
Note that the Cholesky factorization of $(A^\top A + \beta I)$ or $(I + (1/\beta)A A^\top)$ is calculated only once for each test problem.
}

\subsubsection{Other setting and notations} \label{num:subsub:set}
\rev{
 \noindent \textbf{Stopping criterion:}~~
 We adopt the same stopping criterion as in \cite{boyd2011distributed} for all the numerical experiments, \rev{that is, if the primal and dual residuals $r^k$ and $\sigma^k$ satisfy}
\[\label{stop}
\|r^k\|_2 \leq \epsilon_k^{\mathrm{pri}} ~~\mathrm{and}~~ \|\sigma^k\|_2 \leq \epsilon_k ^{\mathrm{dual}},
\]
\rev{then we stop the algorithms, where $r^k = x^k - y^k$, $\sigma^k = -\beta(y^k - y^{k-1})$, and} $\epsilon^{\mathrm{pri}} > 0$ and $\epsilon ^{\mathrm{dual}} > 0$ are feasibility tolerances for the primal and dual feasibility conditions, respectively. These tolerances can be chosen using an absolute and relative criterion from the suggestion in \cite{boyd2011distributed}, such as
\begin{eqnarray*}
\epsilon_k^{\mathrm{pri}} &=& \sqrt{n} \epsilon ^ \mathrm{abs} + \epsilon^ \mathrm{rel} \mathrm{max}\{\|x^k\|_2, \|-y^k\|_2 \}, \\
\epsilon_k ^{\mathrm{dual}} &=& \sqrt{n} \epsilon ^ \mathrm{abs} + \epsilon^ \mathrm{rel} \|\lambda ^k\|_2,
\end{eqnarray*}
where $\epsilon ^ \mathrm{abs}>0$ is an absolute tolerance and $\epsilon^ \mathrm{rel} >0$ is a relative tolerance.

The stopping criterions are set to $\epsilon ^ \mathrm{abs} = 10^{-4}$ and $\epsilon^ \mathrm{rel} = 10^{-3}$ in all experiments.

\medskip
\noindent \textbf{Other setting:}~~We always choose $S = 0$ in \eqref{equ:ADB2}. We set the initial points as $x^0 = y^0 = 0$ and $\lambda^0 = 0$. The maximum iterations are set to be $20000$ in all experiments.

\medskip
\noindent \textbf{Notations in tables for numerical results:}
     \begin{itemize}
     {   \item[$\bullet$] Iter.: the iteration steps for each algorithm;
         \item[$\bullet$] Time: the total CPU time for each algorithm;
         \item[$\bullet$] T-L: the CPU time for the Cholesky factorization and the calculation of $A A^\top$ or $A^\top A$;
         \item[$\bullet$] T-ME: the CPU time of computing for the maximum eigenvalue;
         \item[$\bullet$] T-A: the CPU time for the algorithm proceed without T-L or T-ME;
         \item[$\bullet$] T-QN: the CPU time for BFGS update (matrix $H_k$) of ADM-BFGS.
    }
    \end{itemize}
    All of the CPU time is recorded in seconds.

}
\subsection{Test I: ADMM with the BFGS update}\label{num:sub1}
\rev{In the subsection, we first compare four different methods: ADM-OPT, ADM-SPRO with $\kappa_1 = 1.01$, ADM-IPRO with $\kappa_2 = 0.8$, and ADM-BFGS with $\kappa_3 = 1.01$. We also present numerical results for ADMM with Remedy 2 given in Subsection \ref{subsection:convergence} for the global convergence.

We solve problem \eqref{prob:lasso2} with $n = 2000$, $m = 1000$, $s=0.1$ and $p \in \{0.1, 0.5\}$. All of the other settings and calculations are followed from Subsection \ref{num:set}.
We solve 10 problems in each test, and Table \ref{test1-result1} shows the average of iterative steps and CPU time.
\begin{table}[!htp]
\vspace{-1mm}
\centering\caption{Comparison on iteration steps and CPU time (seconds) among the methods}\par
\begin{tabular}{c c c c| c| c c |c c| c c| c c c} \hline
\multicolumn{4}{c|}{Problem} & \multirow{2}{*}{$\beta$} & \multicolumn{2}{c|}{ADM-OPT} &\multicolumn{2}{c|}{ADM-SPRO}  &\multicolumn{2}{c|}{ADM-IPRO} &\multicolumn{3}{c}{ADM-BFGS} \cr \cline{1-4}\cline{6-14}
$n$ & $m$ & $s$ & $p$ & ~ & Iter. & Time & Iter. &Time & Iter. &Time  &Iter. &Time & T-QN \cr \hline
2000 & 1000& 0.1 & $0.1$ & $100$ & 20.5 & 0.21 & 64.3 & 0.15 & 54.3 & 0.14 & 38.4 & 3.64 & 2.89 \\
2000 & 1000& 0.1 & $0.5$ & $100$ & 63.1 & 0.67 & 197.9 & 0.45 & 160.0 & 0.41 & 71.4 & 7.35 & 5.49 \\
2000 & 1000& 0.1 & $0.5$ & $500$ & 20.9 & 0.55 & 68.5 & 0.32 & 58.4 & 0.31 & 37.3 & 4.50 & 2.84 \\
\hline
\end{tabular}\label{test1-result1}
\end{table}

From the table, it is obvious to see that the classical ADMM find solutions within least iterative steps, while the indefinite proximal ADMM admits the faster one at the CPU time. The ADMM with BFGS can get solutions with relatively less iterations. However it spends much time to compute the $H_k$ as indicated in the column of T-QN. When data matrix $A$ is ill condition or it is impossible to compute the inverse of Hessian matrix of augmented Lagrangian function, it is meaningful to use the matrix $H_k$ since it can get a solution with less iterative steps.
}

\rev{
Next, we give numerical results on iteration steps of ADM-BFGS-R in Table \ref{c-result}. We update $c_k$ by $c_k = \zeta^k$ with $\zeta\in [0,1]$, and chose a positive $\delta \in \{$100, 1e-5$\}$.
We solve problem \eqref{prob:lasso2} with the same settings as those in Table \ref{test1-result1}, that is, $n = 2000$, $m = 1000$, $s=0.1$ and $p \in \{0.1, 0.5\}$.
The results are compared with ADM-BFGS.

\begin{table}[!htp]
\centering\caption{Results for Remedy 2 with different $\delta$ and $c_k\; (c_k = \zeta^k)$}
 \resizebox{\linewidth}{!}{
\begin{tabular}{ c c c c| c| c | c c c | c c c } \hline
\multicolumn{4}{c|}{\multirow{2}{*}{Problem}}  & \multirow{3}{*}{$\beta$} &\multirow{2}{*}{ADM-BFGS} &\multicolumn{3}{c|}{ADM-BFGS-R $\;\;\delta$ = 100} &\multicolumn{3}{c}{ADM-BFGS-R $\;\;\delta$ = 1e-5}  \cr \cline{7-12}
~ & ~ & ~ & ~ & ~ & ~ &{$\zeta=0.1$} &{$\zeta=0.5$} &{$\zeta=0.99$} &{$\zeta=0.1$} &{$\zeta=0.5$} &{$\zeta=0.99$}  \cr \cline{1-4}\cline{6-12}
$n$ & $m$ & $s$ & $p$ & ~ & Iter. & Iter. & Iter. & Iter. & Iter. & Iter. & Iter. \cr \hline
2000 & 1000& 0.1 & $0.1$ & $100$ & 38.4 & 75.7 & 71.3 & 43.9 & 67.6 & 63.0 & 38.8 \\
2000 & 1000& 0.1 & $0.5$ & $100$ & 71.4 & 204.4 & 198.6 & 74.7 & 197.0 & 190.4 & 71.8 \\
2000 & 1000& 0.1 & $0.5$ & $500$ & 37.3 & 74.0 & 68.7 & 39.4 & 71.9 & 66.6 & 37.8 \\
 \hline
\end{tabular}}\label{c-result}
\end{table}
Table \ref{c-result} shows that for each $\delta>0$ and $\zeta\in [0,1]$, ADM-BFGS-R can find a solution. When $\delta$ is close to 0 and $\zeta$ is close to 1, the iterative steps of ADM-BFGS-R approach those of ADM-BFGS .
}

\subsection{Test II: ADMM with limited memory BFGS update}\label{num:sub2}

\rev{In this subsection, we test how the ADMM with limited memory BFGS (ADM-LBFGS) works.

We set the number {\color{blue} $h$} of vectors stored in L-BFGS to 10. The comparisons are among the ADM-OPT, ADM-IPRO, and the proposed ADM-LBFGS. We consider large scale problems with $n=10000$ and $s=0.1$.
}
\subsubsection{Behaviors of ADMMs for different \texorpdfstring{$\beta$}{TEXT}}\label{num:subsub1}
\rev{
We first see behavious of ADMMs for different $\beta$. We solve problem \eqref{prob:lasso2} with $m\in \{1000, 5000, 10000\}$ and $p \in \{0.1, 0.5, 1\}$.
We take $\kappa_2 = 0.8$ for ADM-IPRO \eqref{proxi} and $\kappa_{3} = 1.01$ for ADM-LBFGS \eqref{proxb}.
The other settings of the test problems are given in Subsection \ref{num:subsub:set}.

The results of iteration steps and CPU time (seconds) averaged over 10 random trials are shown in Table \ref{test-beta2}.}
\begin{table}[!ht]
\centering\caption{Comparison among ADMMs for different $\beta$}
 \resizebox{\linewidth}{!}{
\begin{tabular}{c| c| c c c c| c c c c| c c c c} \hline
\multirow{2}{*}{Size} & \multirow{2}{*}{$\beta$} &\multicolumn{4}{c|}{ADM-OPT}& \multicolumn{4}{c|}{ADM-IPRO}  &\multicolumn{4}{c}{ADM-LBFGS} \cr\cline{3-14}
~ & ~ & Iter. & Time & T-A & T-L & Iter. & Time &T-A & T-ME & Iter. & Time &T-A & T-ME  \cr \hline
$m$=1000 &50 & 90.9 & 0.66 & 0.34 & 0.32 & 280.1 & 0.92 & 0.46 & 0.46 & 247.3 & 1.27 & 0.81 & 0.46  \\
$p$=0.1  &100 & 59.1 & 0.56 & 0.22 & 0.34 & 175.7 & 0.74 & 0.29 & 0.45 & 182.2 & 1.04 & 0.59 & 0.45  \\
~& 150 & 67.7 & 0.57 & 0.25 & 0.32 & 197.9 & 0.77 & 0.33 & 0.44 & 157.3 & 0.95 & 0.51 & 0.44  \\
~& 200 & 88.4 & 0.67 & 0.33 & 0.34 & 240.2 & 0.82 & 0.38 & 0.44 & 147.4 & 0.90 & 0.46 & 0.44  \\
[5pt]  \hline
$m$=1000 & 200 & 93.3 & 4.63 & 0.99 & 3.64 & 288.6 & 4.40 & 2.46 & 1.94 & 247.5 & 4.57 & 2.63 & 1.94 \\
$p$=0.5  & 300 & 69.9 & 4.38 & 0.73 & 3.65 & 218.8 & 3.79 & 1.83 & 1.96 & 209.0 & 4.15 & 2.19 & 1.96 \\
~ & 500 & 60.3 & 4.37 & 0.62 & 3.75 & 179.2 & 3.39 & 1.49 & 1.90 & 174.2 & 3.72 & 1.82 & 1.90 \\
~ & 800 & 85.0 & 4.29 & 0.87 & 3.42 & 240.6 & 3.91 & 1.98 & 1.93 & 143.4 & 3.42 & 1.49 & 1.93 \\
[5pt]  \hline
$m$=1000 & 200 & 131.7 & 9.73 & 2.01 & 7.72 & 428.2 & 9.02 & 5.84 & 3.18 & 325.9 & 8.29 & 5.11 & 3.18 \\
$p$=1  & 300 & 97.3 & 9.13 & 1.53 & 7.60 & 305.4 & 7.43 & 4.24 & 3.19 & 273.5 & 7.56 & 4.37 & 3.19 \\
~ & 500 & 67.8 & 8.97 & 1.05 & 7.92 & 213.0 & 6.05 & 2.85 & 3.20 & 220.9 & 6.66 & 3.46 & 3.20 \\
~ & 800 & 67.5 & 9.17 & 1.05 & 8.12 & 195.9 & 5.85 & 2.65 & 3.20 & 183.9 & 6.08 & 2.88 & 3.20 \\
[5pt] \hline
$m$=5000 & 100 & 80.4 & 15.67 & 4.48 & 11.19 & 176.6 & 7.24 & 2.29 & 4.95 & 86.6 & 6.25 & 1.30 & 4.95 \\
$p$=0.1 & 200 & 41.2 & 13.90 & 2.34 & 11.56 & 103.1 & 6.24 & 1.32 & 4.92 & 55.9 & 5.75 & 0.83 & 4.92 \\
~ & 500 & 20.4 & 12.38 & 1.14 & 11.24 & 51.2 & 5.62 & 0.64 & 4.98 & 38.0 & 5.55 & 0.57 & 4.98 \\
~ & 800 & 22.0 & 12.87 & 1.27 & 11.60 & 61.0 & 5.81 & 0.78 & 5.03 & 37.2 & 5.58 & 0.55 & 5.03 \\
[5pt]  \hline
$m$=5000 & 500 & 67.1 & 95.46 & 6.36 & 89.10 & 151.3 & 24.45 & 7.54 & 16.91 & 75.6 & 20.84 & 3.93 & 16.91 \\
$p$=0.5 & 1000 & 34.3 & 92.83 & 3.02 & 89.81 & 86.7 & 21.00 & 4.09 & 16.91 & 50.2 & 19.44 & 2.53 & 16.91 \\
~ & 2000 & 20.4 & 93.28 & 1.79 & 91.49 & 51.4 & 19.91 & 2.36 & 17.55 & 38.1 & 19.45 & 1.90 & 17.55 \\
~ & 2500 & 20.6 & 91.92 & 1.88 & 90.04 & 53.6 & 19.94 & 2.61 & 17.33 & 36.0& 19.13 & 1.80 & 17.33 \\
[5pt]  \hline
$m$=5000 & 1000 & 52.9 & 201.69 & 6.16 & 195.53 & 130.8 & 33.67 & 10.00 & 23.67 & 65.6 & 28.82 & 5.15 & 23.67 \\
$p$=1 & 2000 & 27.2 & 200.39 & 3.07 & 197.32 & 73.0 & 29.23 & 5.39 & 23.84 & 44.9 & 27.27 & 3.43 & 23.84 \\
~ & 3000 & 20.3 & 210.74 & 2.45 & 208.29 & 55.0 & 31.01 & 4.16 & 26.85 & 39.3 & 29.98 & 3.13 & 26.85 \\
~ & 3200 & 20.7 & 208.26 & 2.54 & 205.72 & 53.2 & 30.78 & 4.08 & 26.70 & 38.8 & 29.81 & 3.11 & 26.70 \\
[5pt]  \hline
$m$=10000 & 200 & 59.2 & 59.57 & 10.72 & 48.85 & 100.3 & 15.90 & 3.02 & 12.88 & 60.2 & 14.85 & 1.97 & 12.88 \\
$p$=0.1 & 500 & 24.5 & 51.97 & 4.75 & 47.22 & 48.0 & 14.39 & 1.50 & 12.89 & 30.6 & 13.93 & 1.04 & 12.89 \\
~ & 1000 & 15.9 & 51.13 & 3.06 & 48.07 & 30.0 & 13.82 & 0.95 & 12.87 & 23.7 & 13.66 & 0.79 & 12.87 \\
~ & 1500 & 16.9 & 50.61 & 3.19 & 47.42 & 34.4 & 13.88 & 1.08 & 12.80 & 24.4 & 13.62 & 0.82 & 12.80 \\
[5pt]  \hline
$m$=10000 & 1000 & 49.8 & 426.04 & 9.69 & 416.35 & 88.2 & 52.69 & 9.42 & 43.27 & 50.2 & 48.77 & 5.50 & 43.27 \\
$p$=0.5 & 2000 & 25.7 & 413.19 & 5.20 & 407.99 & 49.0 & 48.17 & 5.40 & 42.77 & 31.2 & 46.26 & 3.49 & 42.77 \\
~ & 3000 & 17.6 & 432.36 & 3.28 & 429.08 & 38.6 & 47.64 & 4.26 & 43.38 & 26.0 & 46.29 & 2.91 & 43.38 \\
~ & 3500 & 15.9 & 408.71 & 2.86 & 405.85 & 34.4 & 45.14 & 3.31 & 41.83 & 25.0 & 44.37 & 2.54 & 41.83 \\
[5pt]  \hline
$m$=10000 & 2000 & 40.8 & 983.12 & 6.91 & 976.21 & 72.0 & 67.15 & 10.49 & 56.66 & 42.6 & 62.94 & 6.28 & 56.66 \\
$p$=1 & 3000 & 27.6 & 948.20 & 4.68 & 943.52 & 52.2 & 64.34 & 7.60 & 56.74 & 34.0 & 61.74 & 5.00 & 56.74 \\
~ & 5000 & 17.4 & 947.22 & 3.06 & 944.16 & 36.4 & 62.45 & 5.34 & 57.11 & 26.0 & 60.97 & 3.86 & 57.11 \\
~ & 5500 & 16.4 & 931.24 & 2.87 & 928.37 & 33.8 & 61.88 & 4.96 & 56.92 & 25.0 & 60.62 & 3.70 & 56.92 \\
\hline
\end{tabular}}\label{test-beta2}
\end{table}

\medskip
\rev{From Table \ref{test-beta2}, we can observe that the ADMM with L-BFGS performs well for different $\beta$.
In each case, ADM-LBFGS can find solutions within the same level CPU time for the algorithm proceed (T-A) as the classical ADMM (ADM-OPT).
ADM-OPT appears to be the best method to find a solution within least iterations and CPU time when the size $m=1000$ and sparsity $p=0.1$. However, it becomes slower due to the CPU time for Cholesky factorization (T-L) when $p=0.5$ and 1.
Note that the T-L takes much time as compared to the computations of the maximum eigenvalue (T-ME) for ADM-LBFGS and ADM-IPRO when the size $m$ of matrix $A$ is larger than 1000, especially when $A$ is less sparse with $p=0.5$ and 1.
Comparing with ADM-OPT when $m=5000$, ADM-LBFGS can reduce the CPU time at about 50$\%$ for the sparse case $p=0.1$, and about 80$\%$ for the hard cases where $p=0.5$ and 1.
Besides, for a large and dense matrix $A$ with $m=10000$ and $p=1$, ADM-LBFGS can reduce the CPU time by 93$\%$ as compared to ADM-OPT.
On the other hand, ADM-LBFGS is a little faster than ADM-IPRO as $\xi$ is sophisticatedly chosen with the maximum eigenvalue.
}

\subsubsection{Behaviors of ADM-PRO and ADM-LBFGS for some different \texorpdfstring{$\xi$}{TEXT}}\label{num:subsub2}
\rev{
In the above experiments, we have chosen $\xi = 0.8* \lambda_{\max} \left( A^{\Tsf}A\right)$ for the indefinite proximal term and $\xi = 1.01*\lambda_{\max}(\beta I + A^\top A)$ for the semidefinite proximal term. This is unrealistic for some large scale applications where the calculation of maximum eigenvalue is expensive.
Next we test the behaviours of ADM-LBFGS and proximal ADMM (ADM-IPRO) with different $\kappa_2$ and $\kappa_3$.

We solve problem \eqref{prob:lasso2} with $m=5000$ and $p\in \{0.5,1\}$. Since the results in Table \ref{test-beta2} for $m=5000$ indicate that a reasonable $\beta$ is around 2000, we take $\beta \in \{1000,2000,3000\}$ in this experiments. We also take $\kappa_2, \kappa_3 \in \{0.75, 0.8, 1.01, 5.0, 10.0, 100\}$ in \eqref{proxi} and \eqref{proxb}.
%
Other settings and notations are given in Subsection \ref{num:subsub:set}.
Table \ref{kappa-result} shows the results of iteration steps and CPU time (seconds) averaged over 10 random trials for every $\kappa_2$ and $\kappa_3$.
\begin{table}[!ht]
\vspace{-1mm}
\centering\caption{Different $\kappa_2$ and $\kappa_3$ for proximal ADMM}
\begin{tabular}{c| c| c c c| c c c} \hline
\multirow{2}{*}{Setting} & \multirow{2}{*}{$\kappa_2, \kappa_3$}  &\multicolumn{3}{c|}{ADM-IPRO}  &\multicolumn{3}{c}{ADM-LBFGS}  \cr\cline{3-8}
~ & ~ & Iter. &T-A & Time & Iter. &T-A & Time\cr \hline
$p=0.5$ & 0.75 & 74.6 & 3.40 & 20.20 & 46.4 & 2.21 & 19.01 \\
$\beta = 1000$ & 0.80 & 80.3 & 3.67 & 20.47 & 47.3 & 2.26 & 19.06  \\
T-ME = 16.80s & 1.01 & 99.6 & 4.56 & 21.36 & 49.7 & 2.42 & 19.22  \\
~ & 5.0 & 400.6 & 18.44 & 35.24 & 102.9 & 5.01 & 21.81  \\
~ & 10.0 & 734.2 & 33.57 & 50.37 & 147.7 & 7.02 & 23.82   \\
~ & 100.0 & 4626.3 & 211.53 & 228.33 & 330.7 & 15.91 & 32.71\\
[6pt]\hline
$p=0.5$ & 0.75 & 42.0 & 1.95 & 18.30 & 33.6 & 1.62 & 17.97  \\
$\beta = 2000$ & 0.80 & 44.3 & 2.06 & 18.41 & 34.7 & 1.68 & 18.03   \\
T-ME = 16.35s & 1.01 & 53.8 & 2.51 & 18.86 & 38.2 & 1.85 & 18.20   \\
~ & 5.0 & 213.5 & 9.85 & 26.20 & 100.4 & 4.89 & 21.24  \\
~ & 10.0 & 376.6 & 17.39 & 33.74 & 137.2 & 6.63 & 22.98 \\
~ & 100.0 & 2324.4 & 107.59 & 123.94 & 327.7 & 16.02 & 32.37  \\
[6pt] \hline
$p=1$ & 0.75 & 62.0 & 4.38 & 30.38 & 41.0 & 3.00 & 29.00  \\
$\beta = 2000$ & 0.80 & 64.9 & 4.58 & 30.58 & 41.5 & 3.02 & 29.02  \\
T-ME = 26.00s  & 1.01 & 81.8 & 5.78 & 31.78  & 44.6 & 3.22 & 29.22 \\
~ & 5.0 & 334.7 & 23.59 & 49.59 & 105.3 & 7.71 & 33.71  \\
~ & 10.0 & 593.8 & 41.76 & 67.76 & 145.1 & 10.67 & 36.67 \\
~ & 100.0 & 3623.6 & 254.90 & 280.90 & 344.2 & 25.06 & 51.06 \\
[6pt] \hline
$p=1$ & 0.75 & 43.8 & 3.13 & 27.29 & 34.5 & 2.49 & 26.65  \\
$\beta = 3000$ & 0.80 & 46.0 & 3.28 & 27.44 & 35.5 & 2.59 & 26.75    \\
T-ME = 24.16s & 1.01 & 56.5 & 4.02 & 28.18 & 39.1 & 2.85 & 27.01 \\
~ & 5.0 & 222.6 & 15.61 & 39.77 & 104.7 & 7.58 & 31.74  \\
~ & 10.0 & 405.5 & 28.40 & 52.56 & 139.8 & 10.11 & 34.27  \\
~ & 100.0 & 2467.4 & 172.53 & 196.69 & 359.7 & 26.17 & 50.33  \\ \hline

\end{tabular}\label{kappa-result}
\end{table}

From Table \ref{kappa-result}, we see that ADM-LBFGS always works well and remains stable. Note that ADM-LBFGS is a little faster than ADM-IPRO when $\xi$ is chosen nearly around the maximum eigenvalue, $\kappa_2, \kappa_3=0.75, 0.80, 1.01$ for instance. On average, it can lead to a $30\%$ reduction in the number of iterations. There are no much differences in the CPU time because T-ME counts for a lot. When $\kappa_2, \kappa_3 = 100$ which are chosen far away from maximum eigenvalue, ADM-LBFGS can always bring out 85-90$\%$ improvement in the number of iterations
and 75-85$\%$ improvement in the CPU time
as compared to ADM-IPRO. Moreover, we find that ADM-LBFGS also works well even when the proximal term is a slight indefinite matrix, i.e., $\kappa_3 < 1$.
}


\subsubsection{Remedy 1: ADM-LBFGS stops updating of \texorpdfstring{$H_k$}{TEXT} for some finite \texorpdfstring{$\bar{k}$}{TEXT}}\label{num:sub3}
\rev{Finally, we investigate the behavior of ADM-LBFGS-R with various $\bar{k}$ when the updating of $H_k$ stops.

We solve problem \eqref{prob:lasso2} with $m=5000$, $p\in \{0.5,1\}$, $\beta\in\{1000,2000\}$, and set $\bar{k} = \{5,10,20,40,50,100\}$ and $\kappa_3 = 1.01$ in \eqref{proxb}. All the other settings are same as the above experiments.
The results of CPU time and iterations of different stopping $\bar{k}$ averaged over 10 random trials are provided in Table \ref{k-result}.

\begin{table}[!htp]
\centering\caption{Results for stopping at different $\bar{k}$}
\begin{tabular}{c |c |c c c} \hline
\multirow{2}{*}{~} &\multirow{2}{*}{$\bar{k}$} &\multicolumn{3}{c}{ADM-LBFGS-R}  \cr\cline{3-5}
~ & ~ & Iter. & T-A & Time \cr \hline
$p=0.5$ & 5 &  106.0 & 4.89 & 19.25 \\
$\beta = 1000$ & 10 & 93.2 & 4.35 & 18.51 \\
T-ME = 14.16s & 20 & 76.8 & 3.63 & 17.79  \\
~ & 40 & 54.2 & 2.56 & 16.72 \\
~ & 50 & 50.2 & 2.43 & 16.59 \\
~ & 100 & 50.1 & 2.42 & 16.58 \\
[6pt] \hline
$p=1$ & 5 & 85.7 & 6.06 & 32.76 \\
$\beta = 2000$ & 10 & 76.1 & 5.37 & 32.07  \\
T-ME = 26.70s &20 & 62.9 & 4.49 & 31.19 \\
~ & 40 & 43.8 & 3.15 & 29.85 \\
~ & 50 & 44.3 & 3.19 & 29.89 \\
~ & 100 & 44.3 & 3.22 & 29.92 \\
\hline
\end{tabular}\label{k-result}
\end{table}

From the above results, we can see that for all $\bar{k}$, the ADM-LBFGS-R can find a solution within the maximum iteration. In particular, the results for $\bar{k} = 50$ and 100 are almost same, which indicates that $T_{50}$ is a well-tuned proximal matrix for the test problems.
}

\subsection{Conclusions of the numerical experiments \label{sub:con}}
\rev{
From all the above numerical results we conclude that
\begin{itemize}
\item[1.] As compared with the classical ADMM (ADM-OPT), ADM-LBFGS is suitable for dense large scale problems because the calculation of the inverse of $A^\top A$ is not necessary for ADM-LBFGS;
\item[2.] ADM-LBFGS always outperforms the general proximal ADMM (ADM-SPRO or ADM-IPRO) at the iterations, especially when the accurate estimation of maximum eigenvalues is difficult.
\end{itemize}
}

\section{Conclusions}\label{section:conclusions}
In this paper, we have proposed a special proximal ADMM where the proximal matrix derived from the BFGS \rev{update} or limited memory BFGS method.
\rev{We have given two remedies for the proximal matrix with the BFGS update to ensure the global convergence of such method.}
 Numerical results on several random problems with the large scale data \rev{have been} given to illustrate the effectiveness of the proposed method.

\rev{Recall that} Theorem \ref{theorem:basic} holds only when the Hessian matrix of the augmented Lagrangian function, that is, \rev{$M = \beta I + A^\top A$} is a constant matrix.
As a future work, we will consider more general problems by ADMM with \rev{the} BFGS update whose $x$-subproblems become unconstrained quadratic programming problem as in this paper. Then we may apply Theorem \ref{theorem:basic} for global convergence.
\rev{On the other hand, as shown in the numerical results, the ADMM with the L-BFGS also works well with a slight indefinite proximal matrix. This will facilitate the future exploration for an indefinite proximal ADMM with the BFGS update.}
\small

\bibliographystyle{siam}
\bibliography{LBFGS}

\clearpage
\begin{spacing}{1}
 \begin{appendices}
\section{Convergence of variable metric semi-proximal ADMM}\label{section:convergence property}
\rev{
Before showing the proof of Theorem \ref{theo:conv}, we give some notations and properties which will be frequently used in the analysis.
At last we provide the detail proof of Theorem \ref{theo:conv}.

Let $(x^*, y^*)$ be an optimal solution of problem \eqref{cp}, and let $\lambda^*$ be a Lagrange multiplier that satisfies the following KKT conditions of problem \eqref{cp}:
\vspace{-1mm}
\begin{subnumcases}{\label{kkt}}
  \eta_f^* - \la^* = 0,\\
  \eta_g^* + \la^* = 0,\\
 x^* - y^* =0,
\end{subnumcases}
 where $\eta_f^* \in \partial f(x^*)$ and $\eta_g^* \in \partial g(y^*).$
 }



%

\medskip
Now we rewrite the iteration \rev{schemes \eqref{equ:ADB1}-\eqref{equ:ADB2}}. Let \rev{$\Omega = \R^n \times \R^n \times \R^n.$}
Using the first-order optimality conditions \rev{for subproblems \eqref{equ:ADB1}-\eqref{equ:ADB2}, we see that} the new iterate $(x^{k+1}, y^{k+1})$ is generated by the following procedure.
 \begin{itemize}
{\item{step 1:}~~  Find $x^{k+1} \in \R^n$ such that $\rev{\eta_f^{k+1}} \in \partial f(x^{k+1})$ and
\begin{equation*}
 \rev{\eta_f^{k+1}} - \la^k + \beta(x^{k+1}-y^k) + T_k (x^{k+1} - x^k) = 0,
\end{equation*}
 \item{step 2:}~~  Find $y^{k+1} \in \R^n$ such that $\rev{\eta_g^{k+1}} \in \partial g(y^{k+1})$ and
\begin{equation*}
 \rev{\eta_g^{k+1}} + \la^k - \beta(x^{k+1}-y^{k+1}) + S (y^{k+1} - y^k) = 0.
\end{equation*}
}
\end{itemize}

\medskip
For $k = 0,1,2,...,$ we use the following notation:
\[\label{not}
u^* = \left(\begin{array}{c}x^* \\ y^* \end{array}\right),\; u^k = \left(\begin{array}{c}x^k \\ y^k \end{array}\right),\;  w^k = \left(\begin{array}{c}x^k\\ y^k\\ \lambda^k\end{array}\right), \;  D_k = \left(\begin{array}{c c}T_k & 0\\ 0 & S\end{array}\right), \; \mathrm{and} \; G_k = \left(\begin{array}{c c c} T_k & 0 & 0\\ 0 & S + \beta I & 0 \\ 0 & 0 & \frac{1}{\beta}I\end{array}\right).
\]
Moreover, for simplicity, we denote
\rev{
\[\label{def-f} F^k = \left(\begin{array}{c}
 \eta_f^k - \lambda^k \\ \eta_g^k+ \lambda^k \\ x^k - y^k \end{array} \right),\]
 where $\eta_f^k$ and $\eta_g^k$} are obtained in steps 1 and 2 in VMSP-ADMM.

\rev{For the sequences $\{w^k\}$ and $\{F^k\}$, we have the following lemma, which is a direct consequence of \cite[Theorem 1]{HLHY2002} and \cite[Lemma 3]{HLHY2002}.}
\begin{lemma}
\label{lemma:con}
Let $w^* = (x^*, y^*, \la^*)$, and $\{w^k\}$ be generated by the scheme \eqref{equ:ADMMB}. Then \rev{we have the following two statements.
\begin{description}
{\item[(i)] ~~ $\|w^{k+1} - w^*\|_{G_k}^2  \leq  \|w^k - w^*\|_{G_k}^2  - (\|u^{k+1} - u^k\|_{D_k}^2 + \beta \|x^{k+1} - y^k\|^2).$
 \item[(ii)] ~~ Suppose that sequence $\{T_k\}$ is bounded. Then, there exists a constant $\mu > 0$ such that for all $k\geq 0$, we have
\begin{equation*}
\|F^{k+1}\| \leq \mu \left( \|u^{k+1} - u^k \|_{D_k}^2 + \|x^{k+1} - y^k\|^2 \right).
\end{equation*}
}
\end{description}
}
\end{lemma}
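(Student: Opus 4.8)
\textbf{Proof plan for Lemma~\ref{lemma:con}.}
The plan is to follow the classical variable-metric proximal ADMM analysis, adapting the arguments of \cite{HLHY2002} to the present semi-proximal (merely positive semidefinite $T_k$) setting. First I would write down the three optimality inclusions produced by steps 1 and 2 together with the multiplier update $\lambda^{k+1} = \lambda^k - \beta(x^{k+1}-y^{k+1})$, and compare them with the KKT system \eqref{kkt}. Subtracting the KKT identities from the step-$k$ identities and using the monotonicity of $\partial f$ and $\partial g$ (i.e. $\langle \eta_f^{k+1} - \eta_f^*, x^{k+1}-x^*\rangle \ge 0$ and likewise for $g$) gives an inequality of the form $\langle \text{(linear terms in } w^{k+1}, w^k, w^*), \text{ differences}\rangle \le 0$. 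The main bookkeeping step is to reorganize these cross terms: replace $\lambda^k$ in the $x$-inclusion by $\lambda^{k+1} + \beta(x^{k+1}-y^{k+1})$, collect the $T_k$- and $S$-weighted terms, and complete the square in the $G_k$-norm. This is exactly where the block-diagonal matrix $G_k = \mathrm{diag}(T_k,\, S+\beta I,\, \tfrac1\beta I)$ comes from, the $S+\beta I$ block absorbing both the proximal term for $y$ and the quadratic penalty, and the $\tfrac1\beta I$ block handling the multiplier. After the algebra, the inequality collapses to
\[
\|w^{k+1}-w^*\|_{G_k}^2 \le \|w^k-w^*\|_{G_k}^2 - \|u^{k+1}-u^k\|_{D_k}^2 - \beta\|x^{k+1}-y^k\|^2,
\]
which is statement (i). A key sub-point is that the cross term $-2\langle \lambda^{k+1}-\lambda^k, x^{k+1}-y^{k+1}\rangle$ produced by expanding the multiplier block is nonnegative after substituting $\lambda^{k+1}-\lambda^k = -\beta(x^{k+1}-y^{k+1})$, and that no positive-definiteness of $T_k$ is needed anywhere---only $T_k \succeq 0$, so that $\|\cdot\|_{D_k}$ and $\|\cdot\|_{G_k}$ are genuine (possibly degenerate) seminorms and all "completed squares" are legitimately $\ge 0$. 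This is the place where the semidefinite case genuinely differs from \cite{HLHY2002}, and is the detail the remark promises to supply in full in the appendix.

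For statement (ii), I would express $F^{k+1}$ directly in terms of the available quantities. From step~1, $\eta_f^{k+1}-\lambda^{k+1} = \eta_f^{k+1}-\lambda^k + \beta(x^{k+1}-y^{k+1}) = -T_k(x^{k+1}-x^k) + \beta(x^{k+1}-y^{k+1}) - \beta(x^{k+1}-y^k) = -T_k(x^{k+1}-x^k) - \beta(y^{k+1}-y^k)$, wait---more carefully, $\beta(x^{k+1}-y^{k+1})-\beta(x^{k+1}-y^k) = \beta(y^k - y^{k+1})$; similarly from step~2, $\eta_g^{k+1}+\lambda^{k+1} = -S(y^{k+1}-y^k) + (\lambda^{k+1}-\lambda^k) + \beta(x^{k+1}-y^{k+1}) - \beta(x^{k+1}-y^{k+1}) = -S(y^{k+1}-y^k)$, and the third component $x^{k+1}-y^{k+1} = -\tfrac1\beta(\lambda^{k+1}-\lambda^k) = (x^{k+1}-y^k) - (y^{k+1}-y^k)$. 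Thus each block of $F^{k+1}$ is a linear combination of $x^{k+1}-x^k$ (weighted by $T_k$), $y^{k+1}-y^k$ (weighted by $S$, $\beta I$, or $I$), and $x^{k+1}-y^k$. Taking norms and using the triangle inequality, $\|F^{k+1}\| \le C(\|T_k\|\,\|x^{k+1}-x^k\| + \|y^{k+1}-y^k\| + \|x^{k+1}-y^k\|)$ for an absolute constant $C$ depending only on $\beta$ and $\|S\|$. Boundedness of $\{T_k\}$ bounds $\|T_k\|$ uniformly; but to match the right-hand side in the statement I would bound $\|x^{k+1}-x^k\| \le \|u^{k+1}-u^k\|$ and $\|y^{k+1}-y^k\| \le \|u^{k+1}-u^k\|$, absorb constants, and combine with $\|x^{k+1}-y^k\|$ to obtain $\|F^{k+1}\| \le \mu(\|u^{k+1}-u^k\|_{D_k}^2 + \|x^{k+1}-y^k\|^2)$ as claimed. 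Strictly the $D_k$-seminorm only dominates the $T_k$-weighted part of $\|x^{k+1}-x^k\|$, so I would take care here---either by reading $\|u^{k+1}-u^k\|_{D_k}$ in the intended (slightly abusive) sense that also controls the Euclidean difference via boundedness, or by invoking that the relevant consequence in the convergence proof is the summability extracted from (i).

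The main obstacle is the bookkeeping in (i): keeping track of which cross terms cancel and verifying that every quantity one wants to be nonnegative really is, \emph{without} ever dividing by or inverting $T_k$. Once the $G_k$-identity is set up correctly, (ii) is routine algebra plus the triangle inequality; the only subtlety there is matching the exact form of the bound to what the downstream convergence argument needs. I would organize the write-up so that (i) is proved first as a self-contained square-completion computation, and (ii) is then read off from the same optimality identities.
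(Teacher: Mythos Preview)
The paper does not actually prove this lemma; it simply cites \cite{HLHY2002} for both parts. Your plan for~(i) is the standard square-completion argument in that lineage and is correct: only $T_k\succeq 0$ and $S\succeq 0$ are used, never invertibility, so the semidefinite extension goes through exactly as you outline.

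For~(ii) there is a genuine gap, which you partly identify but do not close. Your block formulas for $F^{k+1}$ are right: the first block is $-T_k(x^{k+1}-x^k)-\beta(y^{k+1}-y^k)$, the second $-S(y^{k+1}-y^k)$, the third $(x^{k+1}-y^k)-(y^{k+1}-y^k)$. Two problems follow. First, the triangle inequality gives a \emph{linear} bound in the increments, not the quadratic right-hand side printed in the lemma; the step ``absorb constants and obtain $\|F^{k+1}\|\le\mu(\|u^{k+1}-u^k\|_{D_k}^2+\|x^{k+1}-y^k\|^2)$'' is dimensionally inconsistent (almost certainly $\|F^{k+1}\|^2$ is what is intended). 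Second, and more seriously, every block of $F^{k+1}$ contains $y^{k+1}-y^k$ in the Euclidean norm, whereas $\|u^{k+1}-u^k\|_{D_k}^2=\|x^{k+1}-x^k\|_{T_k}^2+\|y^{k+1}-y^k\|_S^2$ only sees the $S$-seminorm of that increment. When $S$ is merely semidefinite---and the paper sets $S=0$ in all experiments---the right-hand side gives no control whatsoever over $\|y^{k+1}-y^k\|$, so neither the stated bound nor its squared version can hold as written. Your two escapes (``read $\|\cdot\|_{D_k}$ abusively'' or ``only the downstream summability matters'') do not repair this: the sole use of~(ii) in the proof of Theorem~\ref{theo:conv} is to infer $F^{k_j}\to 0$ from the vanishing of the right-hand side, and that inference itself requires $\|y^{k+1}-y^k\|\to 0$. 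The clean fix is to sharpen~(i) so that its decrement also contains $\beta\|y^{k+1}-y^k\|^2$---the same computation delivers this if one tracks the $(S+\beta I)$ block of $G_k$ rather than collapsing it to $S$---and then state~(ii) with that enlarged right-hand side. The paper's bare citation to \cite{HLHY2002} glosses over the point, since by its own remark that reference assumes positive definite proximal matrices.
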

\begin{proof}
The proofs of (i) and (ii) can be found in \cite[Theorem 1]{HLHY2002} and \cite[Lemma 3]{HLHY2002}, respectively.
\end{proof}

\rev{Now suppose that Condition \ref{cond} in Subsection \ref{subsection:convergence} holds.}
From the definitions of $\{D_k\}$ \rev{and $\{G_k\}$} in \eqref{not}, together with $T_k \succeq T \succeq 0, S \succeq 0$ and $\beta >0$, it follows that the sequences $\{D_k\}$ \rev{and $\{G_k\}$} also satisfy $0 \preceq D \preceq D_{k+1} \preceq (1+\gamma_k) D_k$, \rev{and $0 \preceq \bar{G} \preceq G_{k+1} \preceq (1+\gamma_k) G_k$} for all $k$, where $D = \left(\begin{array}{c c} T & 0\\ 0 & S\end{array}\right)\; \mathrm{and} \;
\bar{G} = \left(\begin{array}{c c c} T & 0 & 0\\ 0 & S + \beta I & 0 \\ 0 & 0 & \frac{1}{\beta}I\end{array}\right)$, respectively.

We define two constants $C_s$ and $C_p$ as follows:
\[\label{conditon:c}
C_s\colon = \sum_{k=0}^{\infty} \gamma_k  \; \; \mathrm{and} \;\;  C_p\colon = \prod_{k=0}^{\infty} (1+\gamma_k).
\]

\rev{Condition \ref{cond} (ii)} implies that  $0 \leq C_s < \infty$ and $1 \leq C_p < \infty$. \rev{Moreover, we have}
$ T \preceq T_{k} \preceq C_p T_0$ for all $k$,
which means that the sequences $\{T_k\}$ and $\{D_k\}$ are bounded.\par

\medskip
Now we give the proof of Theorem \ref{theo:conv}.

\begin{proof}[\normalsize{\emph{\textbf{Proof of Theorem \ref{theo:conv}:}}}]
First we show that the sequence $\{w^k\}$ is bounded.
\rev{Since $\bar{G} \preceq G_{k+1} \preceq (1+\gamma_k) G_k$,
we have}
\[\label{theo:2:2}
\|w^{k+1} - w^*\|_{G_{k+1}}^2 \leq (1+\gamma_k)\|w^{k+1} - w^*\|_{G_k}^2.
\]

Combining the inequality \eqref{theo:2:2} with \rev{Lemma \ref{lemma:con} (i)}, we have
\vspace{-1mm}
\begin{align}\label{theo:conv:1}
\|w^{k+1} - w^*\|_{G_{k+1}}^2 \leq{}& (1+\gamma_k)\|w^k - w^*\|_{G_k}^2 - (1+\gamma_k)\left( \|u^{k+1} - u^k\|_{D_k}^2 + \beta \|x^{k+1} - y^k\|^2\right) \nn \\
\leq{} & (1+\gamma_k)\|w^k - w^*\|_{G_k}^2 - c_1 \left( \|u^{k+1} - u^k\|_{D_k}^2 + \|x^{k+1} - y^k\|^2\right),
\end{align}
where $c_1 = \min \{1, \beta\}$.
It then follows that we have for all $k$,
\[\label{theo:conv:2}
\|w^{k+1} - w^*\|_{G_{k+1}}^2
\leq  (1+\gamma_k)\|w^k - w^*\|_{G_k}^2
\leq\cdots
\leq  \rev{\left(\prod_{i=0}^{k} (1+\gamma_i)\right)} \|w^0 - w^*\|_{G_0}^2
\leq C_p \|w^0 - w^*\|_{G_0}^2.
\]

\rev{Note that $\|w^{k+1} - w^*\|_{G_{k+1}}^2 = \|x^{k+1} - x^*\|_{T_{k+1}}^2 + \|y^{k+1} - y^*\|_{S+\beta I}^2 + \frac 1 \beta \|\lambda^{k+1} - \lambda^*\|^2$, $\S+\beta I$ is positive definite, and $C_p \|w^0 - w^*\|_{G_0}^2$ is a constant. It then follows from \eqref{theo:conv:2} that $\{y^k\}$ and $\{\la^k\}$ are bounded. We now show that $\{x^k\}$ is also bounded.}

From \eqref{theo:conv:1} and \eqref{theo:conv:2}, we have
\begin{align*}
c_1 \left( \|u^{k+1} - u^k\|_{D_k}^2 + \|x^{k+1} - y^k\|^2\right) \leq \|w^k - w^*\|_{G_k}^2 -  \|w^{k+1} - w^*\|_{G_{k+1}}^2 + \gamma_k  C_p \|w^0 - w^*\|_{G_0}^2.
\end{align*}
\rev{Summing up the inequalities,} we obtain
\begin{align*}
\sum_{k=0}^{\infty}c_1 \left( \|u^{k+1} - u^k\|_{D_k}^2 + \|x^{k+1} - y^k\|^2\right)
&{} \leq{} \|w^0 - w^*\|_{G_0}^2 -  \|w^{k+1} - w^*\|_{G_{k+1}}^2 + \left(\sum_{k=0}^{\infty}\gamma_k\right) C_p \|w^0 - w^*\|_{G_0}^2 \nn \\
&{} \leq{}  (1+ C_s C_p)\|w^0 - w^*\|_{G_0}^2.
\end{align*}
\rev{Since $(1+ C_s C_p)\|w^0 - w^*\|_{G_0}^2$ is a finite constant,} we have
\[\label{theo:conv:3}
\lim_{k\rightarrow \infty} \left( \|u^{k+1} - u^k\|_{D_k}^2 + \|x^{k+1} - y^k\|^2\right) = 0,
\]
which indicates that
\[\label{theo:lim} \lim_{k\rightarrow \infty} \|x^{k+1} - y^k\| = 0. \]

\rev{
Note that $x^* = y^*$ and $\|x^{k+1} - x^*\| = \|x^{k+1} - y^k + y^k - y^*\| \leq \|x^{k+1} - y^k\| + \|y^k - y^*\|$. It then follows from \eqref{theo:lim} that $\{x^k\}$ is bounded.
Consequently, the sequence $\{w^k\}$ is bounded.

Next we show that any cluster point of the sequence $\{w^k\}$ is a KKT point of \eqref{cp}.
}
Since the sequence $\{w^k\}$ is bounded, it has at least one cluster point in $\Omega$. Let \rev{$w^ \infty = (x^\infty, y^\infty, \lambda^\infty) \in \Omega$} be a cluster point of $\{w^k\}$, and let $\{w^{k_j}\}$ be a subsequence of $\{w^k\}$ that converges to point $w^ \infty$.

\rev{From \eqref{theo:conv:3} and Lemma \ref{lemma:con} (ii), we have
$\lim_{j\rightarrow \infty} \|F^{k_j}\| = 0$.
It then follows from the definition of $F^k$ that $x^\infty = y^\infty$.
Moreover, since $\partial f$ and $\partial g$ are upper semi-continuous,
there exists $\eta_f^\infty$ and $\eta_g^\infty$ such that $\eta_f^\infty \in \partial f(x^\infty)$, $\eta_g^\infty \in \partial g(y^\infty)$, $\eta_f^{k_j} \rightarrow \eta_f^\infty$ and $\eta_g^{k_j} \rightarrow \eta_g^\infty$, taking a subsequence if necessary. It then follows from $\lim_{j\rightarrow \infty} \|F^{k_j}\| = 0$ that $\eta_f^\infty - \lambda^\infty = 0$ and $\eta_g^\infty + \lambda^\infty = 0$. Consequently $w^\infty$ satisfies the KKT conditions of problem \eqref{cp}.

Finally, we show that the whole sequence $\{w^k\}$ converges to $w^\infty$.
}

Since $\{w^{k_j}\}$ converges to $w^\infty$, for any positive scalar $\epsilon$, there exists positive integer $q$ such that
\[\label{theo:conv:6}
\|w^{k_q} - w^\infty\|_{G_{k_q}} < \frac{\epsilon}{\rev{C_p^{\frac 1 2}}},
\]
\rev{
Note that \eqref{theo:conv:2} holds for an arbitrary KKT point $w^*$ of problem \eqref{cp}. It then follows from \eqref{theo:conv:2} with $w^* = w^\infty$ that for any $k \geq k_q$, we have
\vspace{-1mm}
\begin{align*}
\|w^k - w^\infty\|_{G_{k}} \leq{} \left(\prod_{i=k_q}^{k-1} (1+\gamma_i)\right)^{1/2} \|w^{k_q} - w^\infty\|_{G_{k_q}}
\leq{}  \left(\prod_{i=0}^{\infty} (1+\gamma_i)\right)^{1/2} \|w^{k_q} - w^\infty\|_{G_{k_q}}
< \epsilon,
\end{align*}
where the second inequality follows from Condition \ref{cond} (ii), and the last inequality follows from \eqref{theo:conv:6} and the definition of $C_p$.
Since $\epsilon$ is an arbitrary positive scalar, this shows that $\{w^k\}$ converges to $w^\infty$.
}
\end{proof}

\end{appendices}
\end{spacing}

\end{document}